\documentclass{article}

\usepackage{amsfonts, amsmath, amssymb, amsthm, amscd}
\usepackage{ascmac}
\usepackage[dvipdfm]{color}
\usepackage{verbatim}
\usepackage[dvips]{graphicx}
\usepackage{graphics}
\usepackage[all,2cell]{xypic}
\usepackage{mathptmx}

\objectmargin+{1mm}
\labelmargin+{0.8mm}
\SelectTips{cm}{12}

\setlength{\topmargin}{-60pt}
\setlength{\headheight}{12truept}
\setlength{\headsep}{25pt}
\setlength{\footskip}{37pt}
\setlength{\hoffset}{10mm}
\setlength{\voffset}{39pt}
\setlength{\oddsidemargin}{-7mm}
\setlength{\evensidemargin}{-7mm}
\setlength{\textheight}{237mm}
\setlength{\textwidth}{153mm} 

\usepackage{helvet}         
\usepackage{courier}        
\usepackage{type1cm} 

\UseAllTwocells


\theoremstyle{plain}  
\newtheorem{thm}{Theorem}[section]

\newtheorem{lem}[thm]{Lemma}
\newtheorem{prop}[thm]{Proposition}

\theoremstyle{definition}
\newtheorem{df}[thm]{Definition}
\newtheorem{ex}[thm]{Example}

\theoremstyle{remark}
\newtheorem{con}[thm]{Conjecture}
\newtheorem{rem}[thm]{Remark}

\title{A survey of Gersten's conjecture}
\date{}

\author{Satoshi Mochizuki}

\begin{document}

\maketitle

\abstract{
This article is the extended notes of 
my survey talk of Gersten's conjecture given 
at the workshop ``Bousfield classes form a set: a workshop 
in a memory of Tetsusuke Ohkawa'' at Nagoya University in August 2015. 
In the last section, I give an explanation of  
my recent work of motivic Gernsten's conjecture. 
}

\section*{Introduction}

\noindent
This article is the extended notes of 
my survey talk of Gersten's conjecture in \cite{Ger73} given 
at the workshop ``Bousfield classes form a set: a workshop 
in a memory of Tetsusuke Ohkawa'' at Nagoya University in August 2015. 
(The slide movie of my talk at the workshop is \cite{Moc15}.) 
In this article, we provide an explanation of a proof 
of Gersten's conjecture in \cite{Moc16a} by 
emphasizing the conceptual idea behind 
the proof rather than its technical aspects. 
In the last section, I give an explanation of  
my recent work of motivic Gernsten's conjecture 
in \cite{Moc16b}.

\paragraph{acknowledgement}
I wish to express my deep gratitude to all 
organizers of the workshop and specially 
professor Norihiko Minami 
for giving me the opportunity to present my work.

\section{What is Gersten's conjecture?}

We start by recalling the following result. 
For $n=0$, $1$, the results are classical and 
for $n=2$, it was given by Spencer Bloch 
in \cite{Blo74} by utilzing the second universal Chern class map in 
\cite{Gro68}.

\begin{prop}
\label{prop:classical Bloch formula}
For a smooth variety $X$ over a field, 
we have the canonical isomorphisms
\begin{equation}
\label{eq:Bloch formula}
\operatorname{CH}^n(X)\simeq \operatorname{H}_{\operatorname{Zar}}^n(X,\mathcal{K}_n)
\end{equation}
for $n=0$, $1$, $2$ 
where $\mathcal{K}_n$ is the Zariski sheafication of the $K$-presheaf 
$U\mapsto K_n(U)$.
\qed
\end{prop}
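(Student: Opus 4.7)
The plan is to dispatch the three cases $n=0,1,2$ in increasing order of difficulty, in each case identifying $\mathcal{K}_n$ with a concrete Zariski sheaf and then computing its cohomology by a resolution whose last term reads as codimension-$n$ cycles modulo rational equivalence.

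For $n=0$, I would use the fact that $K_0(A)=\mathbb{Z}$ for any connected local ring to conclude that $\mathcal{K}_0$ is the constant sheaf $\mathbb{Z}$ on each connected component, so both sides are $\mathbb{Z}^{\pi_0(X)}$. For $n=1$, the identity $K_1(A)=A^\times$ for a local ring gives $\mathcal{K}_1\simeq \mathcal{O}_X^\times$, hence $\operatorname{H}^1_{\operatorname{Zar}}(X,\mathcal{K}_1)=\operatorname{Pic}(X)$; classical divisor-class theory on a regular scheme then identifies this with $\operatorname{CH}^1(X)$.

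For $n=2$, the plan is to build the Gersten complex
\begin{equation*}
0 \to \mathcal{K}_2 \to (i_\eta)_* K_2(k(X)) \to \bigoplus_{x\in X^{(1)}} (i_x)_* k(x)^\times \to \bigoplus_{x\in X^{(2)}} (i_x)_* \mathbb{Z} \to 0,
\end{equation*}
whose differentials are the tame symbol and the divisor map, and to show that it is an exact resolution at every stalk $\mathcal{O}_{X,x}$. Granting this, the three rightmost terms are flasque (being pushforwards from points), and the cohomology is read directly as
\begin{equation*}
\operatorname{H}^2_{\operatorname{Zar}}(X,\mathcal{K}_2)=\operatorname{coker}\left(\bigoplus_{x\in X^{(1)}} k(x)^\times \xrightarrow{\operatorname{div}} \bigoplus_{x\in X^{(2)}} \mathbb{Z}\right).
\end{equation*}
By the very definition of rational equivalence, the image of $\operatorname{div}$ is generated by the cycles $\operatorname{div}_D(f)$ associated to codimension-one integral subschemes $D\subset X$ and $f\in k(D)^\times$, so the cokernel is exactly $\operatorname{CH}^2(X)$.

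The hard part is clearly the exactness of that stalkwise complex, which is precisely the case $n=2$ of Gersten's conjecture for a smooth variety over a field. In Bloch's original 1974 treatment this was sidestepped by constructing the isomorphism directly via the universal second Chern class $c_2\colon K_0 \to \mathcal{K}_2$ and verifying surjectivity and injectivity by hand on codimension-two cycles. In the modern approach one rather invokes Quillen's geometric presentation lemma together with the $\mathbb{A}^1$-homotopy invariance of $K$-theory — precisely the circle of ideas that the remainder of this paper develops.
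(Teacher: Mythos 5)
Your treatment of the $n=0,1$ cases matches the classical arguments the paper has in view: $\mathcal{K}_0$ is the constant sheaf $\mathbb{Z}$ since $K_0$ of a local ring is $\mathbb{Z}$, and $\mathcal{K}_1\cong\mathcal{O}_X^\times$ since the determinant kills $\operatorname{SK}_1$ of a local ring, after which $\operatorname{H}^1_{\operatorname{Zar}}(X,\mathcal{O}_X^\times)=\operatorname{Pic}(X)=\operatorname{CH}^1(X)$ on the regular scheme $X$. For $n=2$ your argument is correct but genuinely different from the one the paper cites. The paper attributes the $n=2$ case to Bloch's direct 1974 construction, which uses the universal second Chern class $c_2\colon K_0\to\mathcal{K}_2$ of \cite{Gro68} to produce a map $\operatorname{CH}^2(X)\to\operatorname{H}^2_{\operatorname{Zar}}(X,\mathcal{K}_2)$ and checks by hand that it is an isomorphism; that argument is logically independent of Gersten's conjecture. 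You instead invoke the Gersten flasque resolution of $\mathcal{K}_2$, whose stalkwise exactness is precisely condition $(2)$ of Proposition~\ref{prop:Gersten's equiv} for the equicharacteristic regular local ring $\mathcal{O}_{X,x}$, i.e.\ Quillen's geometric Gersten theorem, and you then read $\operatorname{CH}^2(X)$ off the global sections of the resolution via the divisor map. That route proves strictly more than was asked: it gives $\operatorname{CH}^n(X)\simeq\operatorname{H}^n_{\operatorname{Zar}}(X,\mathcal{K}_n)$ for \emph{every} $n$, which is exactly the equivalence $(1)\Leftrightarrow(2)$ of Proposition~\ref{prop:Gersten's equiv} specialized to smooth varieties over a field. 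What your approach buys is uniformity in $n$ and a clean mechanism; what Bloch's buys is a proof of the $n=2$ case that does not presuppose the Gersten resolution, which is precisely why the paper records this proposition \emph{before} introducing the Gersten machinery, as independent evidence for the general formula. You flagged this dichotomy yourself in your last paragraph, so there is no gap; you have simply elected to take the later, stronger theorem as the input.
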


\noindent
On the other hand, there are the following spectral sequences. 
To give a precise statement, we introduce some notations. 
For a noetherian scheme $X$, 
we write $\mathcal{M}_X$ for the category of coherent sheaves on $X$. 
There is a filtration
\begin{equation}
\label{eq:filtration of MX}
0\subset \cdots \subset \mathcal{M}_X^2\subset \mathcal{M}_X^1\subset
\mathcal{M}_X^0=\mathcal{M}_X
\end{equation}
by the Serre subcategories $\mathcal{M}_X^i$ of those coherent sheaves whose 
support has codimension $\geq i$.

\begin{prop}
\label{prop:spectral seq}
\begin{enumerate}
\item 
{\rm(}Quillen spectral sequence {\rm \cite{Qui73}, \cite{Bal09}.)} 
For a noetherian scheme $X$, 
the filtration $\mathrm{(\ref{eq:filtration of MX})}$ 
induces the strongly convergent spectral sequence
\begin{equation}
\label{eq:Quillen spec seq}
E_1^{p,q}(X)=\underset{x\in X^p}{\bigoplus}K_{-p-q}(k(x))\Rightarrow G_{-p-q}(X),
\end{equation}
where $X^p$ is the set of points codimension $p$ in $X$. 
Moreover if $X$ is regular separated, then we have the canonical isomorphism
\begin{equation}
\label{eq:E2 term of Quillen spec seq}
E_2^{p,-p}(X)\simeq\operatorname{CH^p}(X).
\end{equation}

\item
{\rm(}Brown-Gersten-Thomason spectral sequence {\rm \cite{BG73}, \cite{TT90}.)} 
For a noetherian scheme of finite Krull dimension $X$, 
we have the canonical equivalence
\begin{equation}
\label{eq:K^B Zariski descent}
\mathbb{K}(X)\simeq \mathbb{H}_{\operatorname{Zar}}^{\bullet}(X;\mathbb{K}(-)).
\end{equation}
In particular, 
there exists the strongly convergent spectral sequence
\begin{equation}
{}'E_2^{p,q}(X)=\operatorname{H}_{\operatorname{Zar}}^p
(X;\widetilde{\mathbb{K}}_{q})
\Rightarrow \mathbb{K}_{q-p}(X)
\end{equation}
where $\widetilde{\mathbb{K}}_{q}$ is the Zariski sheafication of the presheaf 
$U\mapsto \pi_q\mathbb{K}(U)$ of non-connective $K$-theory. 
\end{enumerate}
\qed
\end{prop}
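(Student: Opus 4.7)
My plan is to treat the two parts separately, since they rest on very different technical inputs. For part (1), the starting point is Quillen's localization theorem applied to each inclusion $\mathcal{M}_X^{p+1}\hookrightarrow \mathcal{M}_X^p$ in the filtration (\ref{eq:filtration of MX}). Each such inclusion sits in an exact sequence of abelian categories
\[
\mathcal{M}_X^{p+1}\hookrightarrow \mathcal{M}_X^p \twoheadrightarrow \mathcal{M}_X^p/\mathcal{M}_X^{p+1},
\]
and localization yields a homotopy fibre sequence of $K$-theory spectra. The resulting tower of fibrations produces an exact couple, whose associated spectral sequence is (\ref{eq:Quillen spec seq}). To identify the $E_1$-page, one observes that taking stalks at the generic points of codimension-$p$ supports gives an equivalence between $\mathcal{M}_X^p/\mathcal{M}_X^{p+1}$ and $\bigoplus_{x\in X^p} \mathcal{M}_{\mathcal{O}_{X,x}}^{\mathrm{fl}}$, the direct sum over $X^p$ of the categories of finite-length $\mathcal{O}_{X,x}$-modules. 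Quillen's dévissage theorem then identifies the $K$-theory of the latter with $\bigoplus_{x\in X^p} K_\ast(k(x))$. Strong convergence follows from the standard argument that for any fixed coherent sheaf, the support has some finite codimension, so the filtration is eventually trivial on every generator.

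The identification (\ref{eq:E2 term of Quillen spec seq}) for regular separated $X$ requires computing the $d_1$ differential
\[
E_1^{p-1,-p}(X)=\bigoplus_{x\in X^{p-1}} k(x)^{\times} \longrightarrow E_1^{p,-p}(X)=\bigoplus_{y\in X^p}\mathbb{Z}.
\]
The main local obstacle is to unwind the connecting map of the localization sequence and show this differential is precisely the divisor map $u\mapsto \sum_y \mathrm{ord}_y(u)[y]$, the sum being taken over codimension-one points of the closure $\overline{\{x\}}$. Once this is verified, the cokernel at $E_2^{p,-p}$ is by definition $\mathrm{CH}^p(X)$, using that regularity makes $\overline{\{x\}}$ a scheme on which the classical order-of-vanishing map is well defined. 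Note that no incoming differential at $(p,-p)$ contributes on the $E_2$-page because $E_1^{p+1,-p-1}$ is placed in the wrong spot for $d_1$ to hit $E_2^{p,-p}$.

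For part (2), the crux is Zariski (in fact Nisnevich) descent for the non-connective $K$-theory spectrum $\mathbb{K}$ of Bass--Thomason--Trobaugh. I would first establish the Mayer--Vietoris property: for a Zariski cover $X=U\cup V$ of a quasi-compact quasi-separated scheme, the square
\[
\mathbb{K}(X) \to \mathbb{K}(U)\times \mathbb{K}(V) \to \mathbb{K}(U\cap V)
\]
is homotopy cartesian. This combines Thomason's localization theorem for perfect complexes with the Bass fundamental theorem, the latter being exactly what makes the negative $K$-groups fit into the long exact Mayer--Vietoris sequence. I expect this Mayer--Vietoris step to be the main obstacle in the whole proposition: for Quillen's connective $K$-theory the analogous square fails to be cartesian, and the Bass--Thomason delooping is introduced precisely to absorb the obstruction into negative degrees. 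Once Mayer--Vietoris is in hand, the standard machinery of hypercovers upgrades it to the descent equivalence (\ref{eq:K^B Zariski descent}), and the descent spectral sequence of a sheaf of spectra on $X_{\mathrm{Zar}}$, truncated using the finite Krull dimension hypothesis to ensure strong convergence, yields the spectral sequence ${}'E_2^{p,q}$.
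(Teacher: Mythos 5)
The paper offers no proof of this proposition: it is stated with a \qed{} and references to Quillen \cite{Qui73}, Balmer \cite{Bal09}, Brown--Gersten \cite{BG73} and Thomason--Trobaugh \cite{TT90}, which is where the actual arguments live. Your sketch is a faithful and correct outline of exactly those arguments: for part (1), the localization fibre sequences associated to $\mathcal{M}_X^{p+1}\hookrightarrow\mathcal{M}_X^{p}\twoheadrightarrow\mathcal{M}_X^{p}/\mathcal{M}_X^{p+1}$, the identification of the quotient with finite-length modules at codimension-$p$ points, dévissage to residue fields, and the identification of $d_1$ with the divisor map; for part (2), Mayer--Vietoris from Thomason's localization plus the Bass fundamental theorem, upgraded to Zariski descent, with finite Krull dimension ensuring strong convergence of the descent spectral sequence. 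So the proposal is correct, and it is supplying the content that the paper delegates to its citations rather than diverging from the paper.

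Two small remarks. First, the sentence about ``no incoming differential at $(p,-p)$'' and the cell $E_1^{p+1,-p-1}$ is garbled: the reason $E_2^{p,-p}$ is simply the cokernel of the divisor map is that the \emph{outgoing} $d_1$ lands in $E_1^{p+1,-p}=\bigoplus_{x\in X^{p+1}}K_{-1}(k(x))$, which vanishes because Quillen $K$-theory is connective; you should say that rather than invoking $E_1^{p+1,-p-1}$. Second, your argument that ``the filtration is eventually trivial on every generator'' shows the filtration is exhaustive and Hausdorff, which gives conditional convergence; genuine strong convergence of the Quillen spectral sequence is immediate only once the filtration is finite, i.e.\ when $X$ has finite Krull dimension. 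The proposition's phrasing is itself a little loose on this point for part (1), so this is a caution about the statement rather than a gap you introduced.
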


\noindent
To regard the isomorphisms $\mathrm{(\ref{eq:Bloch formula})}$ as 
the isomorphisms of $E_2$-terms of the spectral sequences above, 
Gersten gave the following observation in \cite{Ger73}. 
For simplicity, for a commutative noetherian ring $A$ with $1$, 
we write $\mathcal{M}_A$ and $\mathcal{M}^p_A$ for 
$\mathcal{M}_{\operatorname{Spec} A}$ and 
$\mathcal{M}_{\operatorname{Spec} A}^p$ respectively. 

\begin{prop}
\label{prop:Gersten's equiv}
The following conditions are equivalent:
\begin{enumerate}
\item
For any regular separated noetherian scheme $X$, we have the 
canonical isomorphism between $E_2$-terms of Quillen and Brown-Gersten-Thomason spectral sequences
$$E_2^{p,q}(X)\simeq{}'E_2^{p,-q}(X).$$
In particular the isomorphisms $\mathrm{(\ref{eq:Bloch formula})}$ hold for $X$ and any non-negative integer $n$.

\item
For any commutative regular local ring $R$ of dimension , 
$E_1$-terms of 
Quillen spectral sequence $\mathrm{(\ref{eq:Quillen spec seq})}$ 
for $\operatorname{Spec} R$ yields 
an exact sequence
\begin{equation}
0\to K_n(R) \to K_n(\operatorname{frac}(R))\to 
\underset{\operatorname{ht}\mathfrak{p}=1}{\bigoplus}
K_{n-1}(k(\mathfrak{p}))\to
\underset{\operatorname{ht}\mathfrak{p}=2}{\bigoplus}
K_{n-2}(k(\mathfrak{p}))\to\cdots.
\end{equation}

\item
For any commutative regular local ring $R$ and 
natural number $1\leq p\leq \dim R$, the canonical inclusion 
$\mathcal{M}^{p}_R \hookrightarrow \mathcal{M}_R^{p-1}$ induces the zero map on 
$K$-theory
$$K(\mathcal{M}_R^{p}) \to K(\mathcal{M}_R^{p-1})$$
where $K(\mathcal{M}_R^i)$ denotes the $K$-theory of 
the abelian category $\mathcal{M}_R^i$.
\end{enumerate}
\qed
\end{prop}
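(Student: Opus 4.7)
The plan is to verify the cycle (3) $\Rightarrow$ (2) $\Rightarrow$ (3) and (3) $\Rightarrow$ (1) $\Rightarrow$ (3), the first pair by unpacking the exact couple that produces the Quillen spectral sequence, and the second pair by sheafifying the $E_1$-complex and comparing with the Brown--Gersten--Thomason spectral sequence.

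For (2) $\Leftrightarrow$ (3) I would exhibit the $E_1$-complex of the Quillen spectral sequence associated to the filtration $\mathrm{(\ref{eq:filtration of MX})}$ explicitly. For each $p$, Quillen's localization theorem gives a fibre sequence $K(\mathcal{M}_R^{p+1}) \xrightarrow{i_*} K(\mathcal{M}_R^{p}) \to K(\mathcal{M}_R^{p}/\mathcal{M}_R^{p+1})$, and devissage identifies the quotient with $\bigoplus_{x \in X^{p}} K(k(x))$. These long exact sequences assemble into the Quillen spectral sequence, and the $E_1$-differentials are the composites $\bigoplus_{x \in X^{p}} K_n(k(x)) \xrightarrow{\partial} K_{n-1}(\mathcal{M}_R^{p+1}) \to \bigoplus_{x \in X^{p+1}} K_{n-1}(k(x))$. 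If (3) holds, then the map $i_*$ on $K_n$ is always zero, so $\partial$ is surjective and its kernel coincides with the image of $K_n(\mathcal{M}_R^{p}) \to \bigoplus_{x \in X^{p}} K_n(k(x))$; splicing these identifications successively yields the long exact sequence of (2), starting with $K_n(R) = K_n(\mathcal{M}_R)$ via Quillen's resolution theorem for regular $R$. Conversely, exactness of (2) forces the vanishing of $i_*$ on $K_n$ by a diagram chase in the same long exact sequence.

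For (3) $\Rightarrow$ (1) I would sheafify on any regular separated noetherian scheme $X$, forming the Cousin-type complex
\begin{equation*}
\mathcal{G}^{\bullet}_q : \bigoplus_{x \in X^0} (i_x)_* K_q(k(x)) \to \bigoplus_{x \in X^1} (i_x)_* K_{q-1}(k(x)) \to \cdots
\end{equation*}
of Zariski sheaves, where $i_x \colon \operatorname{Spec} k(x) \hookrightarrow X$ denotes the canonical inclusion. The stalk of $\mathcal{G}^{\bullet}_q$ at $y \in X$ is the analogous $E_1$-complex for the regular local ring $\mathcal{O}_{X,y}$, so by (3) (equivalently (2)) applied to $\mathcal{O}_{X,y}$ the augmentation $\widetilde{\mathbb{K}}_q \to \mathcal{G}^{\bullet}_q$ is a stalkwise quasi-isomorphism, hence an exact sequence of Zariski sheaves. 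Each summand $(i_x)_* K_*(k(x))$ is flasque (constant on $\overline{\{x\}}$, zero elsewhere), so $\mathcal{G}^{\bullet}_q$ is a flasque resolution of $\widetilde{\mathbb{K}}_q$; taking global sections recovers the Quillen $E_1$-complex and gives $E_2^{p,-q}(X) = \operatorname{H}^p_{\operatorname{Zar}}(X,\widetilde{\mathbb{K}}_q) = {}'E_2^{p,q}(X)$. For the converse (1) $\Rightarrow$ (3), one specializes to $X = \operatorname{Spec} R$ for any regular local $R$ and runs the same comparison backwards at the stalk level, using that the sheafified complex is always flasque to extract stalkwise exactness from the matching of $E_2$-pages.

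The main obstacle I anticipate is the sheafification step: one must verify that the stalk of $\mathcal{G}^{\bullet}_q$ at $y$ really is the Gersten complex of $\mathcal{O}_{X,y}$, which requires checking that Quillen's devissage identifications and the residue maps produced by the localization sequence are compatible with flat base change along $\mathcal{O}_{X,y} \to \mathcal{O}_{X,x}$ for each generalization $x$ of $y$, and that the indexing by codimension in $X$ matches height in $\mathcal{O}_{X,y}$. Once this naturality is in hand, the flasqueness claim and the resolution argument are formal, and the entire proposition reduces to the exact-couple bookkeeping justifying (2) $\Leftrightarrow$ (3).
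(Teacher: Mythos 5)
The paper supplies no proof of this proposition: it is attributed to Gersten \cite{Ger73} and the statement is closed with \qed, so there is no internal argument to compare against. Your proposal is correct and reproduces the classical route of Gersten and Quillen, with two small caveats worth noting: the diagram chase for $(2)\Rightarrow(3)$ is really a short induction on $p$ across several spliced localization sequences (at stage $p$ one uses surjectivity of the preceding boundary map, which comes from the inductive hypothesis that $i_*$ vanishes at lower stages, together with exactness of the Gersten complex at position $p-2$), not a chase within a single long exact sequence; and the catenarity, flat-base-change, and noetherian-direct-sum compatibilities you flag in the sheafification step are precisely the standard nontrivial technical verifications that the classical treatment carries out.
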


\noindent
Here is Gersten's conjecture:

\begin{con}[Gersten's conjecture]
\label{conj:Gersten's conjecture}
The conditions above are true for any commutative regular local ring.
\end{con}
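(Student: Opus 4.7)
The plan is to verify condition (3) of Proposition \ref{prop:Gersten's equiv}: for a regular local ring $R$ and $1\leq p\leq \dim R$, the inclusion $\mathcal{M}_R^p\hookrightarrow \mathcal{M}_R^{p-1}$ induces the zero map on $K$-theory. By the equivalence of the three conditions this suffices, and a standard generation argument reduces matters further to showing that the class $[\mathcal{F}]$ of an individual coherent sheaf $\mathcal{F}\in\mathcal{M}_R^p$ dies in $K(\mathcal{M}_R^{p-1})$.

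First I would geometrize. By N\'eron--Popescu desingularization, a regular local ring containing a field is a filtered colimit of essentially smooth local algebras over its prime field, and Popescu's theorem yields the analogue over a Dedekind base in mixed characteristic. Since both $K$-theory and the codimension filtration on coherent sheaves commute with such filtered colimits, it suffices to treat $R=\mathcal{O}_{X,x}$ for $X$ smooth over a Dedekind base $B$, with $x$ a point of codimension $d=\dim R$.

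Next I would invoke Quillen's geometric presentation lemma. After passing to a Zariski neighbourhood of $x$, one produces a smooth morphism $q:X\to Y$ of relative dimension $1$ onto a smooth $B$-scheme $Y$ whose restriction to the support $Z=\operatorname{supp}\mathcal{F}$ is finite. Viewing $X$ locally as an open subset of $\mathbb{A}^1_Y$ containing $x$, one extends $\mathcal{F}$ to a coherent sheaf $\widetilde{\mathcal{F}}$ on $\mathbb{P}^1_Y$ whose support is finite over $Y$. Specialising at the two sections $t=0$ and $t=\infty$ and using $\mathbb{A}^1$-homotopy invariance of $G$-theory for regular schemes, the class $[\mathcal{F}]$ is identified with a class whose support avoids $x$; this class vanishes after restriction to $\operatorname{Spec} R$, forcing $[\mathcal{F}]=0$ in $K(\mathcal{M}_R^{p-1})$.

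The main obstacle is the geometric presentation lemma itself. In equicharacteristic with an infinite residue field this is Quillen's classical trick; the finite residue field case already requires Gabber's refinement, and mixed characteristic rests on the far deeper two-dimensional presentation theorems of Panin and Gabber. Carrying out the moving argument uniformly across all these cases, and verifying that the resulting identifications are genuine equalities of $K$-theory classes rather than mere equalities of virtual ranks, is where the bulk of the technical work of \cite{Moc16a} is concentrated.
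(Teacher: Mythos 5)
Your proposal follows the classical Quillen--Gabber--Panin--Gillet--Levine line (geometric presentation lemma, extension to $\mathbb{P}^1_Y$, moving the support off the closed point, $\mathbb{A}^1$-invariance of $G$-theory), and this is \emph{not} the approach taken in the paper at all; more seriously, the very first reduction in your argument is false. N\'eron--Popescu desingularization does \emph{not} make an arbitrary mixed-characteristic regular local ring a filtered colimit of essentially smooth algebras over a Dedekind base: Popescu's theorem requires the structure morphism $B \to R$ to be geometrically regular, and for a \emph{ramified} regular local ring (e.g.\ $R = \mathbb{Z}_{(p)}[x]_{(p,x)}/(x^2-p)$, where $R/pR = \mathbb{F}_p[x]/(x^2)$ is non-reduced) the map $\mathbb{Z} \to R$ is not regular, nor need $R$ contain any Dedekind ring over which it is geometrically regular. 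This is precisely the obstruction that kept the mixed-characteristic case open, so invoking it as if it were a known fact collapses the proof at step one. Even in the cases where Popescu does apply, you go on to assert the geometric presentation lemma and the specialization/homotopy argument without supplying them; everything substantive is deferred to ``the bulk of the technical work.''

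The paper takes a genuinely different and non-geometric route. After the reduction in \S\ref{sec:Strategy of proof} (filtered colimit over ideals of the right codimension, resolution theorem, d\'evissage) one is left with a purely ring-theoretic statement: for $B=R/\mathfrak{f}_{S\smallsetminus\{s\}}R$ and $g=f_s$, the map $K(\mathcal{P}_{B/gB})\to K(\mathcal{M}_B(1))$ is null. The paper introduces the exact category $\mathcal{C}$ of two-term complexes of free $B$-modules $[x_1\overset{d}{\to}x_0]$ with $H_0$ killed by $g$, proves that $K(H_0)\colon K(\mathcal{C})\to K(\mathcal{P}_{B/gB})$ is a split epimorphism (\S\ref{sec:split epimorphism theorem}), and then proves $K(\eta)\colon K(\mathcal{C})\to K(\operatorname{Ch}_b(\mathcal{M}_B(1));\operatorname{qis})$ is null via the additivity theorem (\S\ref{sec:zero map theorem}), comparing $\eta$ with two non-functorial ``projections'' $\mu_1,\mu_2$ whose target is a split category $\mathcal{B}$. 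Making $\mu_1,\mu_2$ into honest simplicial maps requires the modified $i^{\bigtriangleup}S^{\bigtriangledown}_{\cdot}$-construction and the machinery of homotopy natural transformations (Definitions~\ref{df:triangular morphism}--\ref{df:simplicial hom nat trans}); no desingularization, no geometric presentation, and no $\mathbb{A}^1$-homotopy invariance enter. So your argument is not merely an alternate route: its key reduction step is false in mixed characteristic, and the remaining steps rely on exactly the geometric inputs that the paper's homotopy-algebraic/weight argument is designed to avoid.
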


\subsection*{Historical Note}

\noindent
The conjecture has been proved in the following cases:

\subsubsection*{The case of general dimension}

\begin{enumerate}
\item
If $A$ is of equi-characteristic, 
then Gersten's conjecture for $A$ is true.\\
We refer to \cite{Qui73} for special cases, 
and the general cases \cite{Pan03} can be deduced from limit 
argument and Popescu's general N\'eron desingularization 
\cite{Pop86}. 
(For the case of commutative discrete valuation rings, 
it was first proved by Sherman \cite{She78}).

\item
If $A$ is smooth over some commutative discrete valuation ring 
$S$ and satisfies some condition, 
then Gersten's conjecture for $A$ is true. \cite{Blo86}.

\item
If $A$ is smooth over some commutative discrete valuation ring $S$ and 
if we accept Gersten's conjecture for $S$, 
then Gersten's conjecture for $A$ is true. \cite{GL87}. 
See also \cite{RS90}. 
\end{enumerate}

\subsubsection*{The case of that $A$ is a commutative discrete valuation ring}

\begin{enumerate}
\item
For the cases of $n=0,\ 1$ are classical and 
$n=2$ was proved by Dennis and Stein, 
announced in \cite{DS72} and proved in \cite{DS75}.

\item
If its residue field is algebraic over a finite field, then Gersten's conjecture for $A$ is true.\\
We refer to \cite{Ger73} for the cases of 
that its residue field is a finite field, 
the proof of general cases \cite{She82} 
is improved from that of special cases 
by using Swan's result \cite{Swa63}, 
the universal property of algebraic $K$-theory \cite{Hil81} 
and limit argument.
\end{enumerate}

\subsubsection*{The case for Grothendieck groups}

Gersten's conjecture for Grothendieck groups has several equivalent forms. 
Namely we can show the following three conditions are equivalent. 
See \cite{CF68}, \cite{Lev85}, \cite{Dut93} and \cite{Dut95}.

\begin{enumerate}
\item
{\bf (Gersten's conjecture for Grothendieck groups).}\ 
{\it For any commutative regular local ring $R$ and 
natural number $1\leq p\leq \dim R$, the canonical inclusion 
$\mathcal{M}^{p}_R \hookrightarrow \mathcal{M}_R^{p-1}$ induces the zero map on 
Grothendieck groups
$$K_0(\mathcal{M}_R^{p}) \to K_0(\mathcal{M}_R^{p-1}).$$}
\item
{\bf (Generator conjecture).}\ 
{\it
For any commutative regular local ring $R$ and 
any natural number $0\leq p \leq \dim R$, 
the Grothendieck group 
$K_0(\mathcal{M}^p_R)$ is generated by 
cyclic modules $R/(f_1,\cdots,f_p)$ where 
the sequence 
$f_1,\cdots,f_p$ forms an $R$-regular sequence. 
}

\item
{\bf (Claborn and Fossum conjecture).}\ 
{\it For any commutative regular local ring $R$, 
the Chow homology group $\operatorname{CH}_k(\operatorname{Spec} R)$ is trivial for any $k<\dim R$.}
\end{enumerate}

\noindent
For historical notes of the generator conjecture, please see \cite{Moc13a}.

\subsubsection*{The case for singular varieties}

Gersten's conjecture for non-regular rings is 
in general false in the literal sense of the word and several 
appropriate modified versions are studied by many authors. 
See \cite{DHM85}, \cite{Smo87}, \cite{Lev88}, \cite{Bal09}, \cite{HM10} 
and \cite{Moc13a}. 
See also \cite{Mor15} and \cite{KM16}.

\subsubsection*{Other cohomology theories}

\begin{enumerate}
\item
For torsion coefficient $K$-theory, Gersten's conjecture for a commutative discrete valuation ring is true. \cite{Gil86}, \cite{GL00}.

\item
For Gersten's conjecture for Milnor $K$-theory, see \cite{Ker09}, \cite{Dah15}.

\item
For Gersten's conjecture for Witt groups, 
see \cite{Par82}, \cite{OP99}, \cite{BW02} and \cite{BGPW03}.

\item
For an analogue of Gersten's conjecture 
for bivariant $K$-theory, see \cite{Wal00}. 

\item
In the proof of geometric case of Gersten's conjecture by Quillen in \cite{Qui73}, 
he introduced a strengthning of Noether normalization theorem. 
There are several variants of Noether normalization theorem 
in \cite{Oja80}, \cite{GS88}, \cite{Gab94} and \cite{Wal98} 
and by utilizing them, 
there exists Gersten's conjecture type theorem 
for universal exactness (see \cite{Gra85}) for Cousin complexes 
(see \cite{Har66}) of certain cohomology theories. 
For axiomatic approaches of these topics, see \cite{BO74}, 
\cite{C-THK97}. See also \cite{Gab93}, \cite{C-T95}, \cite{Lev08} and \cite{Lev13}.

\item
For an analogue of Gersten's conjecture 
for Hochschild coniveau spectral sequences, see \cite{BW16}. 

\item 
For an analogue of Gersten's conjecture for infinitesimal theory, 
see \cite{DHY15}.

\item
For Gersten's complexes for homotopy invariant Zariski sheaves with 
transfers, see \cite{Voe00}, \cite[Lecture 24]{MVW06}. 
See also \cite{SZ03}. 
For injectivity result for pseudo pretheory, see \cite{FS02}.
\end{enumerate}

\subsubsection*{Logical connections with other conjectures}

\begin{enumerate}
\item
Some conjectures imply that Gersten's conjecture for a commutative discrete valuation ring is true. \cite{She89}.

\item
Parshin conjecture in \cite{Bei84} 
implies Gersten's conjecture of motivic cohomology 
for a localization of smooth varieties over a Dedekind ring. 
(see \cite{Gei04}.) 
It is also known that Tate-Beilinson conjecture (see \cite{Tat65}, 
\cite{Bei87} and \cite{Tat94}) 
implies Parshin conjecture. (see \cite{Gei98}.)
\end{enumerate}

\subsubsection*{Counterexample for non-commutative discrete valuation rings (Due to Kazuya Kato)}

\noindent
In Gersten's conjecture, 
the assumption of commutativity is essential. 
Let $D$ be a skew field finite over
$\mathbf{Q}_p$, $A$ its integer ring and $a$ its prime element. 
As the inner automorphism of $a$ 
induces non-trivial automorphism 
on its residue field,
we have $x \in A^{\times}$ with $y=axa^{-1}x^{-1}$ 
is non vanishing in its residue field, 
a fortiori in $K_1(A)={(A^{\times})}^{\operatorname{ab}}$. 
On the other hand $y$ is a 
commutator in $D^\times$. 
Hence it turns out that the canonical map 
$K_1(A) \to K_1(D)={(D^{\times})}^{\operatorname{ab}}$ 
is not injective.

\section{Idea of the proof}
\label{sec:idea of the proof}

My idea of how to prove Gersten's conjecture has come from weight argument of Adams operations 
in \cite{GS87} and \cite{GS99}. 
In my viewpoint, 
difficulty of solving Gersten's conjecture 
consists of ring theoretic side and 
homotopy theoretic side. 
We will explain ideas about how to overcome each difficulty.

\subsection*{Ring theoretic side}
\label{subsec:ring theoretic side}

Combining the results in \cite{GS87} and \cite{TT90}, 
for a commutative regular local ring $R$, 
there exists Adams operations $\{\varphi_k\}_{k\geq 0}$ 
on $K_0(\mathcal{M}_R^p)$ and we have the equality
\begin{equation}
\label{eq:Adams operation}
\varphi_k([R/(f_1,\cdots,f_p)]=k^p[R/(f_1,\cdots,f_p)]
\end{equation}
where a sequence $f_1,\cdots,f_p$ is an $R$-regular sequence. 
Thus roughly saying, 
the generator conjecture says that for each $p$, $K_0(\mathcal{M}^p_R)$ 
is spanned by objects of weight $p$ and Gersten's conjecture 
could follow from weight argument of Adams operations. 
We illustrate how to prove that the generator conjecture implies 
Gersten's conjecture for $K_0$ without using Adams operations.

\begin{proof}
Let a sequence $f_1,\cdots,f_p$ be an $R$-regular sequence. Then 
there exists the short exact sequence 
$$0\to R/(f_1,\cdots,f_{p-1})\overset{f_p}{\to}R/(f_1,\cdots,f_{p-1})\to R/(f_1,\cdots,f_p)\to 0$$
in $\mathcal{M}_R^{p-1}$. Thus 
the class $[R/(f_1,\cdots,f_p)]$ in $K_0(\mathcal{M}_R^p)$ goes to 
$$[R/(f_1,\cdots,f_p)]=[R/(f_1,\cdots,f_{p-1})]-[R/(f_1,\cdots,f_{p-1})]=0$$
in $K_0(\mathcal{M}_R^{p-1})$.
\end{proof}

\noindent
{\bf Strategy 1.}

\noindent
We will establish and prove a higher analogue of 
the generator conjecture.

\bigskip
\noindent
Inspired from the works \cite{Iwa59}, \cite{Ser59}, \cite{Bou64}, 
\cite{Die86} and \cite{Gra92}, 
we establish 
a classification theory of modules by utilizing cubes 
in \cite{Moc13a}, \cite{Moc13b} and \cite{MY14}. 
In this article, 
we will implicitly use these theory and simplify the arguments 
in \cite{Moc13a} and \cite{Moc16a}.

\subsection*{Homotopy theoretic side}
\label{subsec:homotopy theoretic side}

Roghly saying, 
we will try to compare the following two functors on $K$-theory.
We denote the category of bounded complexes on $\mathcal{M}_R^{p-1}$ 
by $\operatorname{Ch}_b(\mathcal{M}_R^{p-1})$.
$$\mathcal{M}_R^p \to \operatorname{Ch}_b(\mathcal{M}_R^{p-1}),$$
$$\ \ \ \ \ \ \ \ \ \ \ \ \ \ \ \ \ \ \ \ \ \ R/(f_1,\cdots,f_p)\mapsto
\begin{cases}
\begin{bmatrix}
R/(f_1,\cdots,f_p)\\
\downarrow f_p\\
R/(f_1,\cdots,f_p)
\end{bmatrix}
\underset{\text{qis}}{\sim} R/(f_1,\cdots,f_p)\\
\begin{bmatrix}
R/(f_1,\cdots,f_p)\\
\downarrow \operatorname{id}\\
R/(f_1,\cdots,f_p)
\end{bmatrix}
\underset{\text{qis}}{\sim} 0
\end{cases}$$
The functors above shall be homotopic to each other on 
$K$-theory by the additivity theorem. 
A problem is that 
the functors above are not {\bf $1$-funtorial}!! 
We need to a good notion of $K$-theory for 
higher category theory or 
need to discuss more subtle argument for 
such exotic functors.

\noindent
{\bf Strategy 2.}

\noindent
We give a modified definition of algebraic $K$-theory 
in a particular situation 
and establish a technique of rectifying lax functors 
to $1$-functors and 
by utilizing 
this definition and these techniques, 
we will treat such exotic functors inside the 
classical Waldhausen $K$-theory.

\section{Strategy of the proof}
\label{sec:Strategy of proof}

\noindent
Let $A$ be a commutative noetherian ring with $1$ and 
let $I$ be an ideal of $A$ with codimension $Y=V(I)\geq p$ in 
$\operatorname{Spec} A$. 
Let $\mathcal{M}_A^I$ be a full subcategory of 
$\mathcal{M}_A^p$ consisting of 
those modules $M$ supported on $Y=V(I)$ and 
$\mathcal{M}_{A,\operatorname{red}}^I$ be 
a full subcategory of $\mathcal{M}_A^I$ consisting of 
those modules $M$ such that $IM$ are trivial. 
We call a module in $\mathcal{M}_{A,\operatorname{red}}^I$ a 
{\it reduced module 
with respect to $I$}.
Let $\mathcal{P}_{A}$ be the category of finitely generated 
projective $A$-modules. 
For a commutative regular local ring $R$, 
let $J$ be an ideal generated by $R$-regular sequence 
$f_1,\cdots,f_p$ such that $f_i$ is an prime element for any $1\leq i\leq p$. 

\smallskip
\noindent
First notice the following isomorphisms:

\begin{equation}
\label{eq:Gersten strategy 1}
K(\mathcal{M}_R^p)\underset{\textbf{I}}{\overset{\scriptstyle{\sim}}{\to}} 
\underset{\substack{\operatorname{codim}_{\operatorname{Spec} R }V(I)= p \\
\operatorname{Spec} R/I \underset{\text{regular}}{\hookrightarrow}
\operatorname{Spec} R}}{\operatorname{colim}}K(\mathcal{M}_{R}^I),
\end{equation}
\begin{equation}
\label{eq:Gersten strategy 2}
K(\mathcal{P}_{R/J})
\underset{\textbf{II}}{\overset{\scriptstyle{\sim}}{\to}} 
K(\mathcal{M}_{R,\operatorname{red}}^J)
\underset{\textbf{III}}{\overset{\scriptstyle{\sim}}{\to}} K(\mathcal{M}_R^J).
\end{equation}

\noindent
Since $R$ is Cohen-Macaulay, 
the ordered set of all ideals of $R$ 
that contains an $R$-regular sequence of length $p$ 
with usual inclusion is directed. 
Thus $\mathcal{M}_R^p$ is the filtered limit 
$\underset{I}{\operatorname{colim}}\mathcal{M}_{R}^I $ 
where $I$ runs through any 
ideal generated by any $R$-regular sequence of length $p$. 
Thus the isomorphism \textbf{I} follows from cocontinuity of $K$-theory. 
The isomorphism \textbf{II} follows from 
the resolution theorem and regularity of $R$. 
Finally the isomorphism \textbf{III} follows from the d\'evissage theorem. 
Let $\mathfrak{f}_S=\{f_s\}_{s\in S}$ be an $R$-regular sequence such that 
$f_s$ is a prime element of $R$ for any $s\in S$. 
We call such a sequence $\mathfrak{f}_S$ a {\it prime regular sequence}. 
For a commutative noetherian ring $A$ with $1$ and for an ideal $I$ of $A$, 
let $\mathcal{M}_A^I(1)$ and $\mathcal{M}_{A,\operatorname{red}}^I(1)$ 
be the full subcategory of $\mathcal{M}_A^I$ and $\mathcal{M}_{A,\operatorname{red}}^I$ respectively 
consisting of 
those $A$-modules $M$ with $\operatorname{projdim}_A M\leq 1$. 
We fix an element $s\in S$. 
Since 
the inclusion functor 
$\mathcal{P}_{R/\mathfrak{f}_SR}\hookrightarrow \mathcal{M}_R^{\# S-1}$ 
factors through $\mathcal{M}_{R/\mathfrak{f}_{S\smallsetminus\{s\}}R}(1)$, 
the problem reduce to the following:

\noindent
{\it
For any prime regular sequence $\mathfrak{f}_{S}=\{f_s\}_{s\in S}$ of $R$ 
and any element $s\in S$, 
the inclusion functor $\mathcal{P}_{R/\mathfrak{f}_SR}\hookrightarrow 
\mathcal{M}_{R/\frak{f}_{S\smallsetminus\{s\}}R}(1)$ 
induces 
the zero map 
$$K(\mathcal{P}_{R/\mathfrak{f}_SR})\to 
K(\mathcal{M}_{R/\frak{f}_{S\smallsetminus\{s\}}R}(1))$$
on $K$-theory.}

\noindent
For simplicity we set $B=R/\frak{f}_{S\smallsetminus\{s\}}R$ and $g=f_s$. 
We let 
$\operatorname{Ch}_b(\mathcal{M}_B(1))$ denote 
the category of bounded complexes on $\mathcal{M}_B(1)$. 
(We use homological index notation.) 
Let $\mathcal{C}$ be the full subcategory of 
$\operatorname{Ch}_b(\mathcal{M}_B(1))$ 
consisting of those complexes $x$ such that 
$x_i=0$ unless $i=0$, $1$ and $x_1$ and $x_0$ are 
free $B$-modules and 
the bounded map $d^x\colon x_1\to x_0$ is injective and 
$\operatorname{H}_0 x:=\operatorname{Coker} (x_1\overset{d^x}{\to} x_0)$ 
is annihilated by $g$. 
We can show that 
$\mathcal{C}$ is an idempotent exact category such that 
the inclusion functor $\eta\colon\mathcal{C}\hookrightarrow 
\operatorname{Ch}_b(\mathcal{M}_B(1))$ is exact and 
reflects exactness and the functor 
$\operatorname{H}_0\colon \mathcal{C}\to \mathcal{P}_{B/gB}$ is exact. 
Thus we obtain the commutative diagram 
$${\xymatrix{
K(\mathcal{C}) \ar[r]^{\!\!\!\!\!\!\!\!\!\!\!\!\!\!\!\!\!\!\!\!\!\!\!\!\!\!\!\!\!\!\!\!K(\eta)} \ar[d]_{K(\operatorname{H}_0)} & 
K(\operatorname{Ch}_b(\mathcal{M}_B(1));\operatorname{qis})\\
K(\mathcal{P}_{B/gB}) \ar[r] & K(\mathcal{M}_B(1)) \ar[u]^{\wr}_{\textbf{I}}
}}$$
where $\operatorname{qis}$ is the class of all quasi-isomorphisms in 
$\operatorname{Ch}_b(\mathcal{M}_B(1))$ and 
the map \textbf{I} 
which is induced from the inclusion functor 
$\mathcal{M}_B(1)\hookrightarrow 
\operatorname{Ch}_b(\mathcal{M}_B(1))$ is 
a homotopy equivalence by Gillet-Waldhausen theorem. 
We will prove that

\begin{enumerate}
\item 
The map $K(\operatorname{H}_0)$ is a split epimorphism 
in the stable category of spectra 
(see \S \ref{sec:split epimorphism theorem}) 
and

\item
The map $K(\eta)$ is the zero map in the stable category of spectra. 
(See \S \ref{sec:zero map theorem})
\end{enumerate}

\noindent
Assertion $\mathrm{1.}$ corresponds with Strategy 1 and 
assertion $\mathrm{2.}$ corresponds with Strategy 2 
in the previous section.

\section{Split epimorphism theorem}
\label{sec:split epimorphism theorem}

\noindent
In this section, we will give a brief proof of assertion 
that $K(\operatorname{H}_0)$ is a split epimorphism 
in the stable category of spectra. 

\bigskip
\noindent
Let $\mathcal{D}$ be the full subcategory of 
$\operatorname{Ch}_b(\mathcal{M}_B(1))$ 
consisting 
of those complexes $x$ such that $x_i=0$ 
unless $i=0$, $1$ and 
the bounded map $d^x\colon x_1\to x_0$ is injective and 
$\operatorname{H}_0 x:=\operatorname{Coker} (x_1\overset{d^x}{\to} x_0)$ 
is in $\mathcal{M}_{B,\operatorname{red}}^{gB}(1)$. 
We can show that $\mathcal{D}$ is an exact category 
such that the inclusion functor 
$\mathcal{D}\hookrightarrow \operatorname{Ch}_b(\mathcal{M}_B(1))$ 
is exact and reflects exactness and we can also show that 
the functor $\operatorname{H}_0\colon \mathcal{D}\to \mathcal{M}_{B,\operatorname{red}}^{gB}(1) $ is exact. 
Thus we obtain the commutative square below
$$\xymatrix{
K(\mathcal{C}) \ar[r] \ar[d]_{K(\operatorname{H}_0)} & 
K(\mathcal{D}) \ar[d]^{K(\operatorname{H}_0)}\\
K(\mathcal{P}_{B/gB}) \ar[r] & K(\mathcal{M}_{B,\operatorname{red}}^{gB}(1)).
}$$
where the horizontal maps are induced from the inclusion functors. 
Since the functor $\operatorname{H}_0\colon \mathcal{D}\to \mathcal{M}_{B,\operatorname{red}}^{gB}(1)$ admits a section which is defined by 
sending an object $x$ in 
$\mathcal{M}_{B,\operatorname{red}}^{gB}(1)$ to 
the complex $[0\to x]$ in $\mathcal{D}$, 
the right vertical map in the diagram above is a split epimorphism. 
Moreover the inclusion functors $\mathcal{P}_{B/gB}\hookrightarrow \mathcal{M}_{B,\operatorname{red}}^{gB}(1)$ and $\mathcal{C}\hookrightarrow\mathcal{D}$ 
induce equivalences of 
triangulated categories on bounded derived categories respectively. 
(Compare \cite[2.21]{Moc13a} and \cite[2.1.1]{Moc16a}.) 
Thus the horizontal maps in the diagram above are homotopy equivalences 
and the left vertical map is also a split epimorphism in the stable category 
of spectra. \qed

\section{Zero map theorem}
\label{sec:zero map theorem}

\noindent
In this section we will give an outline of the proof 
of assertion that $K(\eta)$ is the zero map in the stable category of spectra.

\noindent
Let $\mathcal{B}$ be the full subcategory of 
$\operatorname{Ch}_b\mathcal{M}_{B}(1)$ consiting 
of those complexes $x$ such that $x_i=0$ unless $i=0$ or $i=1$. 
Let $\mathfrak{s}_i\colon \mathcal{B} \to \mathcal{M}_B(1)$ 
($i=0$, $1$) be an exact functor 
defined by sending an object $x$ in 
$\mathcal{B}$ to $x_i$ in $\mathcal{M}_B(1)$. 
By the additivity theorem, the map 
$\mathfrak{s}_1\times \mathfrak{s}_2\colon 
iS_{\cdot}\mathcal{B}\to 
iS_{\cdot}\mathcal{M}_B(1)\times iS_{\cdot}\mathcal{M}_B(1)$ 
is a homotopy equivalence. 
Let $j\colon \mathcal{B} \to \operatorname{Ch}_b(\mathcal{M}_{B}(1))$ 
be the inculsion functor. 

\noindent
We wish to define two exact 'functors' 
$\mu_1,\ \mu_2\colon \mathcal{C} \to \mathcal{B}$ 
which satisfy the following conditions:

\begin{enumerate}
\item
We have the equality 
\begin{equation}
\label{eq:mu1=mu2}
\mathfrak{s}_1\times\mathfrak{s}_2\mu_1=\mathfrak{s}_1\times\mathfrak{s}_2\mu_2.
\end{equation}

\item
There are natural transformations $\eta \to j\mu_1$ and 
$0 \to j\mu_2$ such that all componets are quasi-isomorphisms.
\end{enumerate}

\noindent
Then we have the equalities
$$
K(\eta)=K(j\mu_1)=K(j)K(\mathfrak{s}_1\times\mathfrak{s}_2)^{-1}K(\mathfrak{s}_1\times\mathfrak{s}_2\mu_1)=
K(j)K(\mathfrak{s}_1\times\mathfrak{s}_2)^{-1}K(\mathfrak{s}_1\times\mathfrak{s}_2\mu_2)=K(j\mu_2)=0.
$$

\noindent
To define the 'functors' $\mu_i$ ($i=1$, $2$), 
we analyze morphisms in $\mathcal{C}$. 

\subsection{Structure of $\mathcal{C}$}
\label{sec:structure of C}

\begin{df}
\label{df:(n,m)}
(Compare \cite[1.1.3.]{Moc16a}.)\ \ 
For a pair of non-negative integers $(n,m)$, 
we write ${(n,m)}_B$ for the complex of the form 
$\displaystyle{
\begin{bmatrix}
B^{\oplus n}\oplus B^{\oplus m}\ \ \ \ \ \ \ \ \ \ \  \\
\ \ \ \ \ \ \ \ \ \downarrow\begin{pmatrix}gE_n & 0\\ 0 & E_m \end{pmatrix}\\
B^{\oplus n}\oplus B^{\oplus m}\ \ \ \ \ \ \ \ \ \ \ 
\end{bmatrix}
}$
in $\mathcal{C}$ where $E_k$ is the $k\times k$ unit matrix. 
\end{df}

\begin{lem}
\label{lem:structure of C}
$ $

\noindent
\begin{enumerate}
\renewcommand{\labelenumi}{$\mathrm{(\arabic{enumi})}$}
\item
{\rm(}Compare {\rm \cite[1.2.10., 1.2.13.]{Moc16a}.)}\ \ 
Let $n$ be a positive integer. 
For any endomorphism $a\colon {(n,0)}_B\to{(n,0)}_B$, 
the following conditions are equivalent.

\begin{enumerate}
\renewcommand{\labelenumii}{$\mathrm{(\roman{enumii})}$}
\item
$a$ is an isomorphism.

\item
$a$ is a quasi-isomorphism.
\end{enumerate}

\item
{\rm(}Compare {\rm\cite[2.17.]{Moc13a}.)}\ \ 
An object in $\mathcal{C}$ is projective. 
In particular, $\mathcal{C}$ is a semi-simple exact category.

\item
{\rm(}Compare {\rm\cite[1.2.15.]{Moc16a}.)}\ \ 
For any object $x$ in $\mathcal{C}$, 
there exists a pair of non-negative integers 
$(n,m)$ such that $x$ is isomorphic to ${(n,m)}_B$.
\end{enumerate}
\end{lem}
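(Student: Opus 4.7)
My plan is to handle the three parts in the order $(1)$, $(3)$, $(2)$, since the classification in $(3)$ makes the proof of $(2)$ essentially formal. For $(1)$: a chain map $a \colon (n,0)_B \to (n,0)_B$ is a pair of $n \times n$ matrices $a_0, a_1$ over $B$ satisfying $g a_0 = a_1 g$. Because $g$ is a non-zero-divisor on $B$ (it belongs to a regular sequence), this forces $a_0 = a_1 =: A$. The only nonzero homology of $(n,0)_B$ sits in degree $0$ and equals $(B/g)^n$, so $a$ is a quasi-isomorphism iff the induced map $\bar A \colon (B/g)^n \to (B/g)^n$ is invertible iff $\det(A) \bmod g$ is a unit of $B/g$. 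Since $B$ is local with $g \in \mathfrak{m}_B$, the quotient $B/g$ is local with maximal ideal $\mathfrak{m}_B/g$; hence $\det(A) \bmod g \in (B/g)^{\times}$ if and only if $\det(A) \in B^{\times}$, which is exactly the condition that $A \in \mathrm{GL}_n(B)$, equivalently that $a$ is an isomorphism of complexes. The reverse implication is trivial.

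For $(3)$: let $x = (x_1 \xrightarrow{d} x_0)$ be an object of $\mathcal{C}$. The vanishing $g \cdot \mathrm{Coker}(d) = 0$ combined with injectivity of $d$ produces a unique $B$-linear section $s \colon x_0 \to x_1$ satisfying $d s = g \cdot 1_{x_0}$; applying $d$ to $s d - g \cdot 1_{x_1}$ and using injectivity again yields $s d = g \cdot 1_{x_1}$. Localizing at $g$ shows $d$ becomes an isomorphism of free $B[g^{-1}]$-modules, so $x_1$ and $x_0$ share a common rank $n$. The resolution $0 \to x_1 \to x_0 \to K \to 0$ with $K := \mathrm{Coker}(d)$ gives $\mathrm{pd}_B K \leq 1$; the change-of-rings identity $\mathrm{pd}_B K = \mathrm{pd}_{B/g} K + 1$ (valid because $g$ is a NZD on $B$ killing $K$) then forces $\mathrm{pd}_{B/g} K \leq 0$, so $K$ is free over the local ring $B/g$, say $K \cong (B/g)^m$. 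Right-exactness of $- \otimes_B B/\mathfrak{m}_B$ forces the reduction $\bar d$ to have rank $n-m$, and lifting Gaussian elimination over the residue field (invoking that $B$ is local, together with Nakayama) produces $P, Q \in \mathrm{GL}_n(B)$ with $P d Q = \mathrm{diag}(E_{n-m}, D')$ for some $m \times m$ matrix $D'$ whose entries lie in $\mathfrak{m}_B$. The section $s$ transports to $Q^{-1} s P^{-1}$; expanding the identities $(P d Q)(Q^{-1} s P^{-1}) = g E_n = (Q^{-1} s P^{-1})(P d Q)$ in block form forces the transported section to be block diagonal, supplying a $Z \in M_m(B)$ with $D' Z = Z D' = g E_m$. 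Consequently $B^m \xrightarrow{D'} B^m$ is a minimal free resolution of $(B/g)^m$, minimality coming from the fact that $D'$ has entries in $\mathfrak{m}_B$; uniqueness of minimal free resolutions over the local Noetherian ring $B$ yields $V, W \in \mathrm{GL}_m(B)$ with $V D' W^{-1} = g E_m$, and combining these changes of basis with $P$ and $Q$ (plus the obvious permutation) gives an isomorphism of chain complexes $x \cong (m, n-m)_B$.

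For $(2)$: by $(3)$, every object of $\mathcal{C}$ is a finite direct sum of copies of $(1, 0)_B$ and $(0, 1)_B$, so it suffices to show these two building blocks are projective in the exact category $\mathcal{C}$. A direct Hom-computation --- using the injectivity of $d_y$ and the $g$-section $s_y$ attached to every $y \in \mathcal{C}$ --- produces natural identifications $\mathrm{Hom}_{\mathcal{C}}((0,1)_B, y) \cong y_1$ and $\mathrm{Hom}_{\mathcal{C}}((1,0)_B, y) \cong y_0$. The degreewise-evaluation functors $y \mapsto y_i$ are exact because admissible exact sequences in $\mathcal{C}$ are degreewise admissible exact in $\mathcal{M}_B(1)$, so $(1,0)_B$ and $(0,1)_B$ are projective in $\mathcal{C}$; it follows that every object is projective and therefore every admissible exact sequence of $\mathcal{C}$ splits, establishing semi-simplicity. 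I expect the main obstacle to be the closing normalization step of $(3)$, namely identifying $D'$ with $g E_m$ up to units; this rests on two nontrivial inputs --- the change-of-rings formula for projective dimension (which implies $\mathrm{Coker}(d)$ is free over $B/g$) and the uniqueness of minimal free resolutions over a local Noetherian ring.
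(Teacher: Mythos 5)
Your part $(1)$ is correct and is essentially the paper's argument (reduce to a determinant computation and apply Nakayama in the local ring $B$). Your part $(2)$, relying on the classification from $(3)$ plus the Hom-computations $\operatorname{Hom}_{\mathcal{C}}((1,0)_B,y)\cong y_0$ and $\operatorname{Hom}_{\mathcal{C}}((0,1)_B,y)\cong y_1$ together with degreewise exactness, is a valid route and is cleaner than the paper's, which instead lifts directly along an admissible epimorphism by first lifting on $\operatorname{H}_0$ and then absorbing the resulting chain homotopy.

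The gap is in $(3)$, in the step where you try to \emph{derive} that $K=\operatorname{Coker}(d)$ is free over $B/gB$. The identity $\operatorname{pd}_B K=\operatorname{pd}_{B/g}K+1$ is only valid when $\operatorname{pd}_{B/g}K<\infty$: the correct change-of-rings statement is an inequality $\operatorname{pd}_B K\geq\operatorname{pd}_{B/g}K+1$ with equality when the right side is finite, and there is no implication running the direction you need (finite $\operatorname{pd}_B K$ does not force finite $\operatorname{pd}_{B/g}K$). It genuinely fails here: take $R=B=k[[x,y]]$, $g=x^2+y^3$ (prime, so this fits the running hypotheses with $S$ a singleton), and $d=\begin{pmatrix}x & -y^2\\ y & x\end{pmatrix}$. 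Then $\det d=g$, $d$ is injective, and $d\cdot\operatorname{adj}(d)=gE_2$ shows $gK=0$, so $[B^2\xrightarrow{d}B^2]$ lies in $\mathcal{C}$ as you and the survey have defined it. But $K$ is minimally generated by two elements while having rank one along $V(g)$, so $K$ is a non-free maximal Cohen--Macaulay $B/g$-module (a nontrivial matrix factorization), and $\operatorname{pd}_{B/g}K=\infty$. So $\operatorname{H}_0(x)$ need not be projective over $B/gB$. The paper's own proof of $(3)$ also invokes ``$\operatorname{H}_0(x)$ is a finitely generated projective $B/gB$-module'' but treats it as given, not as something to prove: presumably in \cite{Moc16a} the definition of $\mathcal{C}$ (or of $\mathcal{M}_B(1)$) already restricts to complexes whose $\operatorname{H}_0$ lands in $\mathcal{P}_{B/gB}$, and this survey's definition has silently dropped that clause. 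Your argument cannot recover that condition from the hypotheses as stated, and should instead simply take it as part of the definition; once that is done, the remainder of your matrix-normalization argument (Smith form over the residue field, lifting by Nakayama, uniqueness of minimal free resolutions) is correct, though more computational than the paper's route of using the comparison theorem to split off a summand $(n,0)_B$ with acyclic complement.
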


\begin{proof}
$\mathrm{(1)}$ 
We assume condition $\mathrm{(ii)}$. 
In the commutative diagram below
$$
\xymatrix{
B^{\oplus n } \ar[r]^{gE_n} \ar[d]_{a_1} & B^{\oplus n } \ar[r] \ar[d]_{a_0} & 
\operatorname{H}_0({(n,0)}_B) \ar[d]^{\operatorname{H}_0(a)}\\
B^{\oplus n } \ar[r]_{gE_n} & B^{\oplus n } \ar[r] & 
\operatorname{H}_0({(n,0)}_B),
}$$
first we will prove that $a_0$ is an isomorphism. 
Then $a_1$ is also an isomorphism by the five lemma. 
By taking determinant of $a_0$, 
we shall assume that $n=1$. 
Then assertion follows from Nakayama's lemma.

\smallskip
\noindent
$\mathrm{(2)}$ 
Let $t\colon y\twoheadrightarrow z$ 
be an admissible epimorphism in $\mathcal{C}$ 
and let $f\colon x\to z$ be a morphism in $\mathcal{C}$. 
Then since $\operatorname{H}_0(x)$ is a projective $B/gB$-module, 
there exist a homomorphism of $B/gB$-modules  
$\sigma\colon \operatorname{H}_0(x)\to \operatorname{H}_0(y)$ 
such that $\operatorname{H}_0 (t)\sigma=\operatorname{H}_0 (f)$. 
Since $x$ is a complex of free $B$-modules, 
there exists a morphism of complexes 
$s'\colon x\to y$ such that 
$\operatorname{H}_0(s')=\sigma$ and $ts'$ is chain homotopic 
to $f$ by \cite[Comparison theorem 2.2.6.]{Wei94}. 
Namely there is a map $h\colon x_0 \to z_1$ such that 
${(f-ts')}_0=d^zh$ and ${(f-ts')}_1=hd^x$. 
$$\xymatrix{
x_1 \ar[d]_{d^x} \ar[r]^{{(f-ts')}_1}& z_1 \ar[d]^{d^z}\\
x_0 \ar[ru]_h \ar[r]_{{(f-ts')}_0} & z_0. 
}$$
Since $x_0$ is projective, 
there is a map $u\colon x_0\to y_1$ such that $t_1u=h$. 
We set $s_1:={s'}_1+ud^x$ and $s_0:={s'}_0+d^yu$. 
Then we can check that $s$ is a morphism of complexes of $B$-modules 
and $f=ts$.

\smallskip
\noindent
$\mathrm{(3)}$ 
By considering $\displaystyle{x\otimes_B B\left [\frac{1}{g}\right]}$, 
we notice that $x_1$ and $x_0$ are same rank. 
Thus we shall assume $x_1=x_0=B^{\oplus m}$. 

\smallskip
\noindent
First we assume that $x$ is acyclic. 
Then the boundary map $d^x\colon x_1\to x_0$ 
is invertible and 
$$
\begin{bmatrix}
x_1\ \ \   \\
\ \downarrow d^x\\
x_0\ \ \ 
\end{bmatrix}
\begin{matrix}
\overset{d^x}{\to}
\\
\underset{\operatorname{id}}{\to}
\end{matrix}
\begin{bmatrix}
B^{\oplus m}\ \ \ \\
\ \ \ \downarrow\operatorname{id}_{B^{\oplus m}}\\
B^{\oplus m}\ \ \ 
\end{bmatrix}
$$
gives an isomorphism between $x$ and ${(0,m)}_B$. 
Thus we obtain the result in this case. 

\smallskip
\noindent
Next assume that $\operatorname{H}_0(x)\neq 0$. 
Then sicne $\operatorname{H}_0(x)$ is 
a finitely generated projective $B/gB$-module, 
there is a positive integer $n$ and an isomorphism 
$\sigma\colon {(B/gB)}^{\oplus n}\overset{\scriptstyle{\sim}}{\to} 
\operatorname{H}_0(x)$. 
Then by \cite[Comparison theorem 2.2.6.]{Wei94}, 
there exists morphisms of complexes in $\mathcal{C}$, 
${(n,0)}_B\overset{a}{\to} x$ and $x\overset{b}{\to}{(n,0)}_B$ 
such that $\operatorname{H}_0(a)=\sigma$ and 
$\operatorname{H}_0(b)=\sigma^{-1}$. 
Thus by $\mathrm{(1)}$, 
$ba$ is an isomorphism. 
By replacing $a$ with $a{(ba)}^{-1}$, 
we shall assume that $ba=\operatorname{id}$. 
Hence there exists a complex $y$ in $\mathcal{C}$ and a split 
exact sequence:
\begin{equation}
\label{eq:split exact seq}
{(n,0)}_B\overset{a}{\rightarrowtail} x \twoheadrightarrow y. 
\end{equation}
Since boundary maps of objects in $\mathcal{C}$ are injective, 
the functor $\operatorname{H}_0$ from $\mathcal{C}$ 
to the category of finitly generated projective 
$B/gB$-modules is exact. 
By taking $\operatorname{H}_0$ 
to the sequence $\mathrm{(\ref{eq:split exact seq})}$, 
it turns out that 
$y$ is acyclic and by the first paragraph, 
we shall assume that $y$ is of the form ${(0,m)}_B$. 
Hence $x$ is isomorphic to ${(n,m)}_B$.
\end{proof}

\begin{rem}[Morphisms of $\mathcal{C}$]
\label{rem:mor of C}
(Compare \cite[1.2.16.]{Moc16a}.)\ \ 
We can denote a morphism $\varphi\colon {(n,m)}_B\to {(n',m')}_B$ of 
$\mathcal{C}$ by 
$$
\begin{bmatrix}
B^{\oplus n}\oplus B^{\oplus m}\ \ \ \ \ \ \ \ \ \ \  \\
\ \ \ \ \ \ \ \ \ \downarrow\begin{pmatrix}gE_n & 0\\ 0 & E_m \end{pmatrix}\\
B^{\oplus n}\oplus B^{\oplus m}\ \ \ \ \ \ \ \ \ \ \ 
\end{bmatrix}
\begin{matrix}
\overset{\varphi_{1}}{\to}
\\
\underset{\varphi_{0}}{\to}
\end{matrix}
\begin{bmatrix}
B^{\oplus n'}\oplus B^{\oplus m'}\ \ \ \ \ \ \ \ \ \ \ \\
\ \ \ \ \ \ \ \ \ \downarrow\begin{pmatrix}gE_{n'} & 0\\ 0 & E_{m'} \end{pmatrix}\\
B^{\oplus n'}\oplus B^{\oplus m'}\ \ \ \ \ \ \ \ \ \ \ 
\end{bmatrix}
$$
with 
$\displaystyle{\varphi_{1}=
\begin{pmatrix}
\varphi_{(n',n)} & \varphi_{(n',m)}\\ 
g\varphi_{(m',n)} & \varphi_{(m',m)} 
\end{pmatrix}}$ and 
$\displaystyle{\varphi_{0}=
\begin{pmatrix} 
\varphi_{(n',n)} & g\varphi_{(n',m)}\\ 
\varphi_{(m',n)} & \varphi_{(m',m)}
\end{pmatrix}}$ 
where $\varphi_{(i,j)}$ are $i\times j$ matrices whose coefficients are in $B$. 
In this case we write 
\begin{equation}
\label{eq:matrix presentation}
\displaystyle{
\begin{pmatrix} 
\varphi_{(n',n)} & \varphi_{(n',m)}\\ 
\varphi_{(m',n)} & \varphi_{(m',m)}
\end{pmatrix}}
\end{equation}
for $\varphi$. 
In this matrix presentation of morphisms, the composition 
of morphisms between objects 
${(n,m)}_B\overset{\varphi}{\to}{(n',m')}_B\overset{\psi}{\to}{(n'',m'')}_B$ 
in $\mathcal{C}$ is described by 
\begin{multline}
\label{eq:composition}
\begin{pmatrix} 
\psi_{(n'',n')} & \psi_{(n'',m')}\\ 
\psi_{(m'',n')} & \psi_{(m'',m')}
\end{pmatrix}
\begin{pmatrix}
\varphi_{(n',n)} & \varphi_{(n',m)}\\ 
\varphi_{(m',n)} & \varphi_{(m',m)}
\end{pmatrix}\\
=
\begin{pmatrix}
\psi_{(n'',n')}\varphi_{(n',n)}+g\psi_{(n'',m')}\varphi_{(m',n)} & 
\psi_{(n'',n')}\varphi_{(n',m)}+\psi_{(n'',m')}\varphi_{(m',m)}\\
\psi_{(m'',n')}\varphi_{(n',n)}+\psi_{(m'',m')}\varphi_{(m',n)} & 
g\psi_{(m'',n')}\varphi_{(n',m)}+\psi_{(m'',m')}\varphi_{(m',m)}
\end{pmatrix}.
\end{multline}
Thus the category 
$\mathcal{C}$ is categorical equivalent to the category 
whose objects are oredered pair of 
non-negative integers $(n,m)$ and whose morphisms 
from an object $(n,m)$ to $(n',m')$ are $2\times 2$ matrices 
of the form 
(\ref{eq:matrix presentation}) 
of $i\times j$ matrices $\varphi_{(i,j)}$ whose coefficients are in $B$ 
and compositions are given by the formula (\ref{eq:composition}). 
We sometimes identify these two categories.  
\end{rem}

\subsection{Modified algebraic $K$-theory}

\noindent
A candidate of a pair of 
$\mu_i\colon \mathcal{C}\to \mathcal{B}$ ($i=1$, $2$) are following. 
We define ${\mu'}_1,\ {\mu'}_2\colon\mathcal{C}\to\mathcal{B}$ 
to be associations by sending an object 
${(n,m)}_B$ in $\mathcal{C}$ 
to ${(n,0)}_B$ and ${(0,n)}_B$ 
respecitvely and a morphism 
$$\displaystyle{\varphi=\begin{pmatrix} 
\varphi_{(n',n)} & \varphi_{(n',m)}\\
\varphi_{(m',n)} & \varphi_{(m',m)}
\end{pmatrix}\colon {(n,m)}_B\to {(n',m')}_B}$$ in $\mathcal{C}$ 
to 
$\displaystyle{\begin{bmatrix}
B^{\oplus n}\\
\downarrow gE_n\\
B^{\oplus n}
\end{bmatrix}
\begin{matrix}
\overset{\varphi_{(n',n)}}{\to}
\\
\underset{\varphi_{(n',n)}}{\to}
\end{matrix}
\begin{bmatrix}
B^{\oplus n'}\\
\downarrow gE_{n'}\\
B^{\oplus n'}
\end{bmatrix}}$ and 
$\displaystyle{\begin{bmatrix}
B^{\oplus n}\\
\downarrow E_n\\
B^{\oplus n}
\end{bmatrix}
\begin{matrix}
\overset{\varphi_{(n',n)}}{\to}
\\
\underset{\varphi_{(n',n)}}{\to}
\end{matrix}
\begin{bmatrix}
B^{\oplus n'}\\
\downarrow E_{n'}\\
B^{\oplus n'}
\end{bmatrix}}$ 
respectively. 
Notice that they are not $1$-functors. 
We need to make revisions in the previous idea. 
We introduce a modified version of algebraic $K$-theory of $\mathcal{C}$. 

\begin{df}[Triangular morphisms]
\label{df:triangular morphism}
(Compare \cite[2.3.5.]{Moc16a}.)\ \ 
We say that a morphism 
$\varphi\colon {(n,m)}_B\to {(n',m')}_B$ 
in $\mathcal{C}$ of the form $\mathrm{(\ref{eq:matrix presentation})}$ 
is an {\it upper triangular} if 
$\varphi_{(m',n)}$ is the zero morphism, 
and say that $\varphi$ is a {\it lower triangular} if 
$\varphi_{(n',m)}$ is the zero morphism. 
We denote the class of all upper triangular isomorphisms in $\mathcal{C}$
by $i^{\bigtriangleup}$. 
Next we define $S^{\bigtriangledown}_{\cdot}\mathcal{C}$ to be a simplicial 
subcategory of $S_{\cdot}\mathcal{C}$ consisting of those objects 
$x$ such that $x(i\leq j) \to x(i'\leq j')$ is a lower triangular 
morphism for each $i\leq i', j \leq j'$. 
\end{df}

\begin{prop}
$ $

\noindent
\begin{enumerate}
\renewcommand{\labelenumi}{$\mathrm{(\arabic{enumi})}$}
\item 
{\rm(}Compare {\rm\cite[2.3.5]{Moc16a}.)}\ \ 
The inclusion functor 
$iS_{\cdot}^{\bigtriangledown}\mathcal{C} \to iS_{\cdot}\mathcal{C}$ 
is a homotopy equivalence.

\item
{\rm(}Compare {\rm\cite[2.3.6]{Moc16a}.)}\ \ 
The inclusion functor 
$i^{\bigtriangleup}S_{\cdot}^{\bigtriangledown}\mathcal{C} \to iS_{\cdot}^{\bigtriangledown}\mathcal{C}$ 
is a homotopy equivalence.

\item
{\rm(}Compare {\rm\cite[2.3.7]{Moc16a}.)}\ \ 
The associations $\mu'_1,\ \mu'_2\colon \mathcal{C}\to 
\operatorname{Ch}_b(\mathcal{M}_B(1))$ induces 
simplicial maps $\mu_1,\ \mu_2\colon i^{\bigtriangleup}S^{\bigtriangledown}_{\cdot}\mathcal{C}\to iS_{\cdot}\mathcal{B}$ 
respectively.

\item
{\rm(}Compare {\rm\cite[2.3.7]{Moc16a}.)}\ \ 
$\mu_1$ is homotopic to $\mu_2$.
\end{enumerate}
\end{prop}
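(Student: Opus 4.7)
The plan is to treat the four claims in sequence, leveraging the semi-simplicity of $\mathcal{C}$ (Lemma \ref{lem:structure of C}(2)), the matrix calculus of Remark \ref{rem:mor of C}, and the isomorphism criterion (Lemma \ref{lem:structure of C}(1)). The unifying observation is that the sole obstruction to $\mu'_i$ being a strict simplicial functor into $\mathcal{B}$ is the extra coherence term $g\psi_{(n'',m')}\varphi_{(m',n)}$ appearing in (\ref{eq:composition}); the restrictions $S^{\bigtriangledown}$ (lower triangular admissible monos) and $i^{\bigtriangleup}$ (upper triangular isomorphisms) are designed precisely so that this term vanishes in every composition actually arising in the simplicial set.

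For (1), since $\mathcal{C}$ is semi-simple, every admissible mono $\varphi\colon(n,m)_B\rightarrowtail(n',m')_B$ splits with cokernel of the form $(n'-n,m'-m)_B$. I would construct a retraction $iS_\cdot\mathcal{C}\to iS^{\bigtriangledown}_\cdot\mathcal{C}$ by applying, to each filtration mono in an $S_n$-diagram, a base change of the target that kills the upper-right block $\varphi_{(n',m)}$; a linear interpolation between the original basis and its triangularized replacement then yields an explicit simplicial homotopy between the composite and the identity. For (2), dually, any isomorphism arising inside a lower triangular $S^{\bigtriangledown}$-diagram can be rewritten as upper triangular by absorbing its lower-left block into the chosen splittings of the adjacent filtration, and coherence across the diagram is verified from (\ref{eq:composition}).

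For (3), I would verify directly that under the restriction to $i^{\bigtriangleup}S^{\bigtriangledown}_\cdot$ the coherence defect vanishes on every composition: composites of two lower triangular monos have $\psi_{(n'',m')}=0$; composites of two upper triangular isomorphisms have $\varphi_{(m',n)}=0$; and the mixed commutative squares (isomorphism between lower triangular filtrations) unwind using the defining compatibility relation in $\mathcal{C}$. Preservation of admissible monos and isomorphisms by $\mu'_i$ then follows from the explicit matrix form together with Lemma \ref{lem:structure of C}(1) applied to the $(n,n)$-block. For (4), the key is that (\ref{eq:mu1=mu2}) already holds on the nose, since $\mu'_1$ and $\mu'_2$ assign the same matrix $\varphi_{(n',n)}$ to a morphism and produce the same underlying modules $B^{\oplus n}$ in each degree; because $\mathfrak{s}_1\times\mathfrak{s}_2$ is a homotopy equivalence by additivity, the two lifts $\mu_1$ and $\mu_2$ of a common map to $iS_\cdot\mathcal{B}$ must be homotopic, with an explicit homotopy extractable from any chosen section of the additivity equivalence.

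The hard part will be (3). The coherence term is precisely what makes $\mu'_i$ a lax rather than a strict functor, and the whole point of the modified $K$-theory $i^{\bigtriangleup}S^{\bigtriangledown}_\cdot\mathcal{C}$ is to arrange the combinatorics of $S_\cdot$-diagrams so that every composition actually occurring in the simplicial set satisfies triangularity conditions eliminating the defect. Verifying this coherently across all simplicial face and degeneracy relations, rather than merely on individual compositions, is the technical heart of the argument and the payoff of Strategy 2 from \S\ref{sec:idea of the proof}.
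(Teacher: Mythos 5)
Your proposal broadly follows the paper's structure, but there are substantive differences in $\mathrm{(1)}$ and $\mathrm{(2)}$, and a correct but unresolved flag in $\mathrm{(3)}$.

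For $\mathrm{(1)}$ the paper's argument is much shorter and more elementary than yours: since $\mathcal{C}$ is semi-simple (Lemma~\ref{lem:structure of C}~$\mathrm{(2)}$), the inclusion $iS_n^{\bigtriangledown}\mathcal{C}\hookrightarrow iS_n\mathcal{C}$ is already an equivalence of categories in each simplicial degree $n$ (it is full, faithful, and essentially surjective because every filtration in $S_n\mathcal{C}$ can be rigidified, via chosen splittings, to one with lower-triangular structure maps). A degree-wise equivalence of categories gives a degree-wise weak equivalence of nerves, hence a weak equivalence of the diagonal. No retraction or homotopy needs to be produced. The phrase ``linear interpolation between the original basis and its triangularized replacement'' has no meaning in this combinatorial setting (there is no linear structure on the nerve of a groupoid to interpolate in); what one actually has is a natural isomorphism, and the paper avoids even needing that at this step.

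For $\mathrm{(2)}$ your sketch (absorb the lower-left block into chosen splittings) is in the right spirit, but the paper's proof has specific machinery you do not mention and which is genuinely necessary. It introduces the interpolating full subcategories $i_n\mathcal{C}^{(0)}\subset i_n\mathcal{C}^{(1)}\subset\cdots\subset i_n\mathcal{C}^{(n)}$, constructs explicit retractions $q_k$ by conjugating with the upper triangulation $\operatorname{UT}$ of Definition~\ref{df:uppertriangulation}, and exhibits a natural equivalence $\gamma^k\colon j_kq_k\overset{\sim}{\to}\operatorname{id}$. The key technical point, which your argument omits and which is exactly where the $S^{\bigtriangledown}$-restriction earns its keep, is that every component of $\gamma^k$ is a \emph{lower} triangular isomorphism, so that the induced simplicial homotopy lives inside $S^{\bigtriangledown}_{\cdot}$; one then invokes the realization lemma to pass from a degree-wise homotopy equivalence to a homotopy equivalence of diagonals. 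Without identifying that $\gamma^k$ is lower triangular, the claimed simplicial homotopy does not obviously stay in the right simplicial subcategory.

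For $\mathrm{(3)}$ you correctly identify the coherence defect $g\psi_{(n'',m')}\varphi_{(m',n)}$ as the obstruction, and you correctly note that pure upper-upper and lower-lower compositions kill it. You also correctly flag that the mixed squares (a lower-triangular filtration map against an upper-triangular isomorphism, i.e.\ the naturality squares of a morphism in $i^{\bigtriangleup}S^{\bigtriangledown}_n\mathcal{C}$) are the technical heart; but you merely assert they ``unwind,'' and indeed a naive computation shows the defect $g\psi'_{(n,m)}\varphi_{(m,n)}$ does not vanish for free in the mixed case. This survey's proof of $\mathrm{(3)}$ likewise only lists the pure cases, exactness (Lemma~\ref{lem:exact sequences of C}), and isomorphism preservation (Lemma~\ref{lem:11 and 22 are isomorphisms}), and leaves the naturality-square coherence implicit, deferring details to \cite[2.3.7]{Moc16a}; so you are not worse off than the survey here, but you should be aware that the step you wave at is precisely the one that requires the full machinery.

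For $\mathrm{(4)}$ your argument agrees with the paper's: $\mathfrak{s}_1\times\mathfrak{s}_2\mu_1=\mathfrak{s}_1\times\mathfrak{s}_2\mu_2$ on the nose, and by the additivity theorem $\mathfrak{s}_1\times\mathfrak{s}_2$ is a homotopy equivalence, so $\mu_1\simeq\mu_2$.
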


\begin{proof}
$\mathrm{(1)}$ 
Since $\mathcal{C}$ is semi-simple by Lemma~\ref{lem:structure of C} 
$\mathrm{(2)}$, 
the inclusion functor 
$k\colon iS_{\cdot}^{\bigtriangledown}\mathcal{C}\to iS_{\cdot}\mathcal{C}$ 
is an equivalence of categories for each degree. 
Therefore the inclusion functor $k$ induces a weak homotopy equivalence 
$NiS_{\cdot}^{\bigtriangledown}\mathcal{C} \to NiS_{\cdot}\mathcal{C}$.

\smallskip
\noindent
$\mathrm{(2)}$ 
First for non-negative integer $n$, 
let $i_n\mathcal{C}$ be the full subcategory of $\mathcal{C}^{[n]}$ 
the functor category from the totally ordered set 
$[n]=\{0<1<\cdots<n \}$ to $\mathcal{C}$ 
consisting 
of those objects $x\colon [n] \to \mathcal{C}$ such that 
$x(i\leq i+1)$ is an isomorphism in $\mathcal{C}$ for any $0\leq i\leq n-1$. 
Next for integers $n\geq 1$ and $n-1\geq k\geq 0$, 
let $i_n\mathcal{C}^{(k)}$ be the full subcategory of 
$i_n\mathcal{C}$ consisting 
of those objects $x\colon [n] \to \mathcal{C}$ such that 
$x(i\leq i+1)$ is in $i^{\bigtriangleup}$ for any $k\leq i\leq n-1$. 
In particular $i_n\mathcal{C}^{(0)}=i_n^{\bigtriangleup}\mathcal{C}$ and 
by convention, we set $i_n\mathcal{C}^{(n)}=i_n\mathcal{C}$. 
There is a sequence of inclusion functors; 
$$i_n^{\bigtriangleup}\mathcal{C}=i_n\mathcal{C}^{(0)}\overset{j_0}{\hookrightarrow}
i_n\mathcal{C}^{(1)}\overset{j_1}{\hookrightarrow}\cdots \overset{j_n-1}{\hookrightarrow}i_n\mathcal{C}^{(n)}=i_n\mathcal{C}.$$
For each $0\leq k\leq n-1$, 
we will define $q_k\colon i_n\mathcal{C}^{(k+1)}\to i_n\mathcal{C}^{(k)}$ 
to be an exact functor as follows. 
First for any object $z$ in $i_n\mathcal{C}^{(k+1)}$, we shall assume that 
all $z(i)$ are the same object, namely 
$z(0)=z(1)=\cdots=z(n)$. 
Then we define $\alpha_z$ to be an isomorphism of $z(i)$ in $\mathcal{C}$ 
by setting $\alpha_z:=\operatorname{UT}(z(k\leq k+1))$. 
Here for the definition of 
the upper triangulation $\operatorname{UT}$ of $z(k\leq k+1)$, 
see Definition~\ref{df:uppertriangulation} below. 
Next for an object 
$x\colon[n]\to \mathcal{C}$ and a morphism $x\overset{\theta}{\to} y$ in $i_n\mathcal{C}^{(k+1)}$, 
we define $q_k(x)\colon [n]\to\mathcal{C} $ and 
$q_k(\theta)\colon q_k(x)\to q_k(y)$ 
to be an object and a morphism in $i_n\mathcal{C}^{(k)}$ respectively 
by setting 
\begin{equation}
q_k(x)(i):=x(i),
\end{equation}
\begin{equation}
q_k(x)(i\leq i+1):=
\begin{cases}
\alpha^{-1}_x x(k-1\leq k) & \text{if $i=k-1$}\\
x(k\leq k+1)\alpha_x & \text{if $i=k$}\\
x(i\leq i+1) &\text{otherwise},
\end{cases}
\end{equation}
\begin{equation}
q_k(\theta)(i):=
\begin{cases}
\alpha_y^{-1}\theta(k)\alpha_x & \text{if $i=k$}\\
\theta(i) & \text{otherwise}.
\end{cases}
\end{equation}
Obviously $q_kj_k=\operatorname{id}$. 
We define $\gamma^k\colon j_kq_k\overset{\scriptstyle{\sim}}{\to} \operatorname{id}$ to be a natural equivalence 
by setting for any object $x$ in $i_n\mathcal{C}^{(k+1)}$
\begin{equation}
\gamma^k(x)(i):=
\begin{cases}
\alpha_x & \text{if $i=k$}\\
\operatorname{id}_{x_i} & \text{otherwise}.
\end{cases}
\end{equation} 
Let $s_{\cdot}^{\bigtriangledown}:=\operatorname{Ob} S^{\bigtriangledown}$ be a variant 
of $s=\operatorname{Ob} S$-construction. 
Notice that there is a natural identification 
$s_{\cdot}^{\bigtriangledown}i_n\mathcal{C}^{(l)}=i_nS^{\bigtriangledown}\mathcal{C}^{(l)}$ 
for any $0\leq l\leq n$. 
We will show that $\gamma$ induces a simplicial homotopy between the maps 
$s_{\cdot}^{\bigtriangledown}j_kq_k$ and $s_{\cdot}^{\bigtriangledown}\operatorname{id}$. 
The proof of this fact is similar to \cite[Lemma 1.4.1]{Wal85}. 
The key point of well-definedness of the simplicial homotopy 
is that each component of $\gamma$ is lower triangular. 
Therefore the inclusion 
$i_nS_{\cdot}^{\bigtriangledown}\mathcal{C}^{(k)} \to 
i_nS_{\cdot}^{\bigtriangledown}\mathcal{C}^{(k+1)}$ 
is a homotopy equivalence. 
Hence by realization lemma \cite[Appendix A]{Seg74} or \cite[5.1]{Wal78}, 
$NiS_{\cdot}^{\bigtriangledown}\mathcal{C}^{(k)}\to 
NiS_{\cdot}^{\bigtriangledown}\mathcal{C}^{(k+1)}$ 
is also a homotopy equivalence 
for any $0\leq k\leq n-1$. 
Thus we complete the proof.

\smallskip
\noindent
$\mathrm{(3)}$ 
Notice that for a pair of 
composable morphisms in $\mathcal{C}$, 
\begin{equation}
\label{eq:comp mor in C}
{(n,m)}_B\overset{\varphi}{\to} {(n',m')}_B\overset{\psi}{\to} {(n'',m'')}_B,
\end{equation}

\begin{enumerate}
\renewcommand{\labelenumi}{$\mathrm{(\roman{enumi})}$}
\item
if both $\varphi $ and $\psi $ are upper triangular or 
both $\varphi $ and $\psi $ are 
lower triangular, then we have the equality 
${\mu'}_i(\psi\varphi)={\mu'}_i(\psi){\mu'}_i(\varphi)$ 
for $i=1$, $2$, 

\item
if the sequence $\mathrm{(\ref{eq:comp mor in C})}$ 
is exact in $\mathcal{C}$, then the sequence 
$${\mu'}_i({(n,m)}_B)\overset{{\mu'}_i(\varphi)}{\to} 
{\mu'}_i({(n',m')}_B)\overset{{\mu'}_i(\psi)}{\to} 
{\mu'}_i({(n'',m'')}_B) $$
is exact in $\mathcal{B}$ for $i=1$, $2$ 
by Lemma~\ref{lem:exact sequences of C} below and

\item
if $\varphi$ is an isomorphism in $\mathcal{C}$, then 
${\mu'}_i(\varphi)$ is an isomorphism in $\mathcal{B}$ for $i=0$, $1$ 
by Lemma~\ref{lem:11 and 22 are isomorphisms} below. 
\end{enumerate}

\bigskip
\noindent
Thus the associations ${\mu'}_1$ and ${\mu'}_2$ 
induce the simplicial functors 
$\mu_1,\mu_2\colon 
i^{\bigtriangleup}S_{\cdot}^{\bigtriangledown}\mathcal{C}\to iS_{\cdot}\mathcal{B}$.

\smallskip
\noindent
$\mathrm{(4)}$ 
Inspection shows the equalitiy 
$\mathrm{(\ref{eq:mu1=mu2})}$.  
Hence $\mu_1$ is homotopic to $\mu_2$ by the additivity theorem. 
\end{proof}

\begin{df}[Upside-down involution]
\label{df:UDI}
(Compare \cite[1.2.17.]{Moc16a}.)\ \ 
Let $x=[x_1\overset{d^x}{\to}x_0]$ be an object in 
$\mathcal{C}$. 
Since $d^x$ is a monomorphism and $x_1$ and $x_0$ have the same rank, 
$d^x$ is invertible in $\displaystyle{B\left [\frac{1}{g} \right ]}$ 
and $g{(d^x)}^{-1}\colon x_0\to x_1$ is a morphism of $B$-modules. 
We define $\operatorname{UD}\colon \mathcal{C}\to \mathcal{C}$ to be a functor 
by sending an object $[x_1\overset{d^x}{\to}x_0]$ to 
$[x_0\overset{g{(d^x)}^{-1}}{\to}x_1]$ 
and a morphism $\varphi\colon x\to y$ to
$$\begin{bmatrix}
x_0\ \ \ \ \ \ \ \ \ \ \\
\downarrow g{(d^{x})}^{-1}\\
x_1\ \ \ \ \ \ \ \ \ \
\end{bmatrix}
\begin{matrix}
\overset{\varphi_0}{\to}\\
\\
\underset{\varphi_1}{\to}
\end{matrix}
\begin{bmatrix}
y_0\ \ \ \ \ \ \ \ \ \ \\
\downarrow g{(d^y)}^{-1}\\
y_1\ \ \ \ \ \ \ \ \ \
\end{bmatrix}.$$
Namely we have the equations:
$$\operatorname{UD}({(n,m)}_B)={(m,n)}_B,$$
$$\operatorname{UD}\left (\begin{pmatrix}\varphi_{(n',n)}&\varphi_{(n',m)}\\
\varphi_{(m',n)} & \varphi_{(m',m)} \end{pmatrix}
\colon {(n,m)}_B\to{(n',m')}_B
 \right )=
\begin{pmatrix}\varphi_{(m',m)}&\varphi_{(m',n)}\\
\varphi_{(n',m)} & \varphi_{(n',n)} \end{pmatrix}.$$
Obviously $\operatorname{UD}$ is an involution and an exact functor.
We call $\operatorname{UD}$ the {\it upside-down involution}. 
\end{df}

\begin{lem}
\label{lem:11 and 22 are isomorphisms}
{\rm(}Compare {\rm\cite[1.2.18.]{Moc16a}.)}\ \ 
Let 
$\displaystyle{\varphi=\begin{pmatrix}
\varphi_{(n,n)} & \varphi_{(n,m)}\\
\varphi_{(m,n)} & \varphi_{(m,m)}
\end{pmatrix}\colon {(n,m)}_B\to {(n,m)}_B}$ be an isomorphism in $\mathcal{C}$. 
Then $\varphi_{(n,n)}$ and $\varphi_{(m,m)}$ are invertible. 
\end{lem}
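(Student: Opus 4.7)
The plan is to recognize $\varphi_{(n,n)}$ and $\varphi_{(m,m)}$ as the matrix representations of the induced maps on $\operatorname{H}_0$ after reducing modulo $g$, and then to lift invertibility from $B/gB$ up to $B$ via Nakayama. The key external ingredient is that $B=R/\mathfrak{f}_{S\smallsetminus\{s\}}R$ is a commutative local ring with $g$ belonging to its maximal ideal (hence to its Jacobson radical), a property inherited from the situation of \S\ref{sec:Strategy of proof}.

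First I would compute $\operatorname{H}_0({(n,m)}_B)$. The boundary map $\begin{pmatrix}gE_n & 0\\ 0 & E_m \end{pmatrix}$ has image $gB^{\oplus n}\oplus B^{\oplus m}$, so the cokernel is canonically ${(B/gB)}^{\oplus n}$, with the projection sending $(v,w)\in B^{\oplus n}\oplus B^{\oplus m}$ to $v\bmod g$. Plugging
\[
\varphi_0=\begin{pmatrix}\varphi_{(n,n)} & g\varphi_{(n,m)}\\ \varphi_{(m,n)} & \varphi_{(m,m)}\end{pmatrix}
\]
into this projection kills the second block column modulo $g$, so $\operatorname{H}_0(\varphi)$ is identified with multiplication by $\varphi_{(n,n)}\bmod g$ on ${(B/gB)}^{\oplus n}$. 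Since $\varphi$ is an isomorphism in $\mathcal{C}$ it is a fortiori a quasi-isomorphism, so $\operatorname{H}_0(\varphi)$ is invertible, giving invertibility of $\varphi_{(n,n)}\bmod g$ over $B/gB$. Choosing any lift $\psi$ of its inverse, one finds $\psi\varphi_{(n,n)}=I-gN$ for some $n\times n$ matrix $N$ over $B$; taking determinants, $\det(\psi\varphi_{(n,n)})\equiv 1\bmod g$ is a unit of $B$ because $g$ lies in the Jacobson radical, so $\varphi_{(n,n)}$ is invertible over $B$.

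For $\varphi_{(m,m)}$ I would swap the roles of $n$ and $m$ using the upside-down involution of Definition~\ref{df:UDI}. Since $\operatorname{UD}\colon\mathcal{C}\to\mathcal{C}$ is an exact self-equivalence, $\operatorname{UD}(\varphi)\colon {(m,n)}_B\to{(m,n)}_B$ is again an isomorphism, and by the explicit formula for $\operatorname{UD}$ its matrix presentation is
\[
\begin{pmatrix}\varphi_{(m,m)} & \varphi_{(m,n)}\\ \varphi_{(n,m)} & \varphi_{(n,n)}\end{pmatrix},
\]
placing $\varphi_{(m,m)}$ in the top-left block. Applying the preceding paragraph to $\operatorname{UD}(\varphi)$ then yields invertibility of $\varphi_{(m,m)}$. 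I expect no serious obstacle; the points requiring care are the correct identification of $\operatorname{H}_0(\varphi)$ with $\varphi_{(n,n)}\bmod g$ (where the $g$-factor in the off-diagonal block of $\varphi_0$ is essential) and the justification that Nakayama's lemma applies, which depends crucially on $g$ lying in the Jacobson radical of $B$.
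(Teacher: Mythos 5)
Your proof is correct and follows essentially the same route as the paper: identify $\operatorname{H}_0(\varphi)$ with $\varphi_{(n,n)}\bmod g$, lift invertibility from $B/gB$ to $B$ via a determinant/Nakayama argument (here $g$ lying in the Jacobson radical of $B$ is exactly what is needed), and then handle $\varphi_{(m,m)}$ by applying the upside-down involution $\operatorname{UD}$. The only cosmetic difference is that you inline the determinant/Nakayama step, which the paper instead encapsulates by citing Lemma~\ref{lem:structure of C}~(1) for endomorphisms of ${(n,0)}_B$.
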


\begin{proof}
For $\varphi_{(n,n)}$, assertion follows from Lemma~\ref{lem:structure of C} $\mathrm{(1)}$. 
For $\varphi_{(m,m)}$, we apply the same lemma to $\operatorname{UD}(\varphi)$. 
\end{proof}

\begin{df}[Upper triangulation]
\label{df:uppertriangulation}
(Compare \cite[2.2.5.]{Moc16a}.)\ \ 
Let 
$$\displaystyle{\varphi=\begin{pmatrix}
\varphi_{(n,n)} & \varphi_{(n,m)}\\
\varphi_{(m,n)} & \varphi_{(m,m)}
\end{pmatrix}\colon {(n,m)}_B\to {(n,m)}_B}$$
be an isomorphism in $\mathcal{C}$. 
By Lemma~\ref{lem:11 and 22 are isomorphisms}, 
$\varphi_{(m,m)}$ is an isomorphism. 
We define $\operatorname{UT}(\varphi)\colon {(n,m)}_B\to {(n,m)}_B$ 
to be a lower triangular isomorphism by the formula 
$\displaystyle{\operatorname{UT}(\varphi):=\begin{pmatrix} 
E_n & 0\\
-\varphi_{(m,m)}^{-1}\varphi_{(m,n)} & E_m
\end{pmatrix}}$. 
Then we have an equality
\begin{equation}
\label{eq:triangulation}
\varphi\operatorname{UT}(\varphi)=\begin{pmatrix}
\varphi_{(n,n)}-g\varphi_{(n,m)}\varphi_{(m,m)}^{-1}\varphi_{(m,n)} & \varphi_{(n,m)}\\
0 & \varphi_{(m,m)}
\end{pmatrix}.
\end{equation}
We call $\operatorname{UT}(\varphi)$ the {\it upper triangulation} of $\varphi$. 
Notice that if $\varphi$ is upper triangular, 
then $\operatorname{UT}(\varphi)=\operatorname{id}_{{(n,m)}_B}$. 
\end{df}

\begin{lem}[Exact sequences in $\mathcal{C}$]
\label{lem:exact sequences of C}
{\rm(}Compare {\rm\cite[1.2.19.]{Moc16a}.)}\ \ 
Let 
\begin{equation}
\label{eq:seq of Typ}
{(n,0)}_B \overset{\alpha}{\to} {(n',0)}_B \overset{\beta}{\to}
{(n'',0)}_B
\end{equation}
be a sequence of morphisms in $\mathcal{C}$ such that $\beta\alpha=0$. 
If the induced sequence of projective $B/gB$-modules
\begin{equation}
\label{eq:ind exact seq}
\operatorname{H}_0({(n,0)}_B) \overset{\operatorname{H}_0(\alpha)}{\to}
\operatorname{H}_0({(n',0)}_B) \overset{\operatorname{H}_0(\beta)}{\to}
\operatorname{H}_0({(n'',0)}_B)
\end{equation}
is exact, then the sequence $\mathrm{(\ref{eq:seq of Typ})}$ 
is also {\rm (}split{\rm )} exact.
\end{lem}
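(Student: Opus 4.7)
The plan is to translate the problem into matrix manipulations via Remark~\ref{rem:mor of C}, and to repeatedly upgrade statements at the level of $H_0$ to statements in $\mathcal{C}$ by Lemma~\ref{lem:structure of C}~$\mathrm{(1)}$. A morphism ${(n,0)}_B \to {(n',0)}_B$ in $\mathcal{C}$ is recorded by a single $n' \times n$ matrix over $B$ (both chain components being this same matrix), so I write $\alpha,\beta$ as matrices $M,N$ with $NM=0$, while the sequence $\mathrm{(\ref{eq:ind exact seq})}$ is exactly the reduction modulo $g$, namely $(B/gB)^n \overset{\bar M}{\to} (B/gB)^{n'} \overset{\bar N}{\to} (B/gB)^{n''}$.

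The first main step is to build splittings. Interpreting the $H_0$-exactness as a split short exact sequence of free $B/gB$-modules (the subtle point discussed below), projectivity supplies $\bar M',\bar N'$ with $\bar M'\bar M=I_n$ and $\bar N\bar N'=I_{n''}$. Choose arbitrary lifts $M'_0,N'_0$ over $B$; then $M'_0 M$ is an endomorphism of ${(n,0)}_B$ whose $H_0$ equals $I_n$, hence it is invertible in $\mathcal{C}$ by Lemma~\ref{lem:structure of C}~$\mathrm{(1)}$, and replacing $M'_0$ by $(M'_0 M)^{-1}M'_0$ yields a genuine retraction $M' M=\operatorname{id}$. Symmetrically I construct a section $N'$ of $N$.

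The second main step is to identify $\ker(\beta)$ with an object of type ${(k,0)}_B$. Because $N'$ splits $N$ already at the level of $B$-modules in each degree and $B$ is local (quotient of a regular local ring), we have $B^{n'}=N'(B^{n''})\oplus\ker(N)$ with $\ker(N)$ free; the decomposition is compatible with the boundary $gI_{n'}$, which restricts to multiplication by $g$ on each summand, so $\ker(\beta)\simeq{(k,0)}_B$, and $H_0$-exactness together with $\ker(\bar N)\simeq(B/gB)^n$ forces $k=n$. Since $NM=0$ we factor $\alpha=\iota\tilde\alpha$ through $\iota\colon\ker(\beta)\hookrightarrow{(n',0)}_B$; the resulting $\tilde\alpha\colon{(n,0)}_B\to{(n,0)}_B$ induces on $H_0$ the isomorphism $\bar M\colon(B/gB)^n\overset{\sim}{\to}\ker(\bar N)$, so a second application of Lemma~\ref{lem:structure of C}~$\mathrm{(1)}$ lifts $\tilde\alpha$ to an isomorphism in $\mathcal{C}$. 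Together with the section $N'$ this gives ${(n',0)}_B=\alpha({(n,0)}_B)\oplus N'({(n'',0)}_B)$, which is the desired splitting of $\mathrm{(\ref{eq:seq of Typ})}$.

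The main obstacle I expect is not any individual step but the implicit reading of the hypothesis used in Step~1: literally, ``the induced sequence is exact'' refers only to exactness at the middle term, whereas producing both a retraction of $\bar M$ and a section of $\bar N$ really requires a short exact sequence of free modules. I would therefore either strengthen the hypothesis to include injectivity of $\bar M$ and surjectivity of $\bar N$---which appears to be what is actually needed when the lemma is invoked to verify exactness of $\mu'_1$ and $\mu'_2$---or else weaken the conclusion to the intrinsic assertion that $\alpha$ factors as an isomorphism of ${(n,0)}_B$ onto $\ker(\beta)$ inside $\mathcal{C}$, which suffices for the later applications.
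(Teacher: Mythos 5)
Your proof is correct and follows essentially the same strategy as the paper: use the $H_0$-splitting together with Lemma~\ref{lem:structure of C}~(1) to correct an arbitrary lift (either through the comparison theorem, as the paper does, or by direct matrix lifting, as you do) into a genuine section of $\beta$, then identify $\alpha$ with the kernel inclusion up to isomorphism. Your caveat about the word ``exact'' is also warranted --- the paper's proof silently reads~(\ref{eq:ind exact seq}) as a short exact sequence (it deduces $n'=n+n''$ and a section of $\operatorname{H}_0(\beta)$), which is the form the hypothesis actually takes in the lemma's applications to the exactness of $\mu'_1$ and $\mu'_2$.
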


\begin{proof}
Since the sequence $\mathrm{(\ref{eq:ind exact seq})}$ 
is an exact sequence of projective 
$B/gB$-modules, it is a split exact sequence and hence 
we have the equality 
$n'=n+n''$ and 
there exists a homomorphism of $B/gB$-modules  
$\overline{\gamma}\colon\operatorname{H}_0({(n'',0)}_B)\to
\operatorname{H}_0({(n',0)}_B)$
such that 
$\operatorname{H}_0(\beta)\overline{\gamma}=\operatorname{id}_{\operatorname{H}_0({(n'',0)}_B)}$. 
Then by \cite[Comparison theorem 2.2.6.]{Wei94}, 
there is a morphism of complexes of $B$-modules 
$\gamma\colon{(n'',0)}_B\to
{(n',0)}_B$ such that $\operatorname{H}_0(\gamma)=\overline{\gamma}$. 
Since $\beta\gamma$ is an isomorphism by 
Lemma~\ref{lem:structure of C} $\mathrm{(1)}$, 
by replacing $\gamma$ with $\gamma{(\beta\gamma)}^{-1}$, 
we shall assume that 
$\beta\gamma=\operatorname{id}_{{(n'',0)}_B}$. 
Therefore there is a commutave diagram
$$\xymatrix{
{(n,0)}_B \ar[r]^{\alpha} \ar@{-->}[d]_{\delta} & 
{(n',0)}_B \ar[r]^{\beta} \ar@{=}[d]&
{(n'',0)}_B \ar@{=}[d]\\
{(n,0)}_B \ar@{>->}[r]^{\alpha'} & 
{(n',0)}_B \ar@{->>}[r]^{\beta} &
{(n'',0)}_B
}$$
such that the bottom line is exact. 
Here the dotted arrow $\delta$ is induced 
from the universality of $\operatorname{Ker} \beta$. 
By applying the functor $\operatorname{H}_0$ to the diagram above and 
by the five lemma, 
it turns out that $\operatorname{H}_0(\delta)$ is an isomorphism of 
projective $B/gB$-modules 
and hence $\delta$ is also an isomorphism by 
Lemma~\ref{lem:structure of C} $\mathrm{(1)}$. 
We complete the proof.
\end{proof}

\subsection{Homotopy natural transformations}
\label{subsec:Homotopy natural transformation}

\noindent
We denote the simplicial morphism 
$i^{\bigtriangleup}S_{\cdot}^{\bigtriangledown}\mathcal{C}\to
\operatorname{qis} S_{\cdot}\operatorname{Ch}_b(\mathcal{M}_B)$ 
induced from the inclusion functor 
$\eta\colon \mathcal{C}\hookrightarrow \operatorname{Ch}_b(\mathcal{M}_B)$ by 
the same letter $\eta$. 
For simplicial functors 
$\eta,\ j\mu_1,\ j\mu_2,\ 0
\colon i^{\bigtriangleup}S_{\cdot}^{\bigtriangledown}\mathcal{C}\to 
\operatorname{qis} S_{\cdot}\operatorname{Ch}_b(\mathcal{M}_B)$, 
there is a canonical natural transformation $j\mu_2\to 0$ and 
we wish to define a canonical natural transformation 
$\eta\to j\mu_1$. 
A candidate of $\eta\to j\mu_1$ is the following. 

\bigskip
\noindent
For any object ${(n,m)}_B$ in $\mathcal{C}$, we write 
$\delta_{{(n,m)}_B}\colon\eta({(n,m)}_B)\to j\mu'_1({(n,m)}_B)$ 
for the canonical projection 
$\displaystyle{\begin{bmatrix}
B^{\oplus n}\oplus B^{\oplus m}\ \ \ \ \ \ \ \ \ \ \ \\
\ \ \ \ \ \ \ \ \ \downarrow \begin{pmatrix}gE_n&0 \\ 0 & E_m \end{pmatrix}\\
B^{\oplus n}\oplus B^{\oplus m}\ \ \ \ \ \ \ \ \ \ \ 
\end{bmatrix}
\begin{matrix}
\overset{\begin{pmatrix}E_n & 0 \end{pmatrix}}{\to}
\\
\underset{\begin{pmatrix}E_n & 0 \end{pmatrix}}{\to}
\end{matrix}
\begin{bmatrix}
B^{\oplus n}\\
\downarrow gE_{n}\\
B^{\oplus n}
\end{bmatrix}}$. 
This is {\bf not} a simplicial natural transformation. 
But it has the following nice properties. 
For any morphism $\varphi=\begin{pmatrix} 
\varphi_{(n',n)} & \varphi_{(n',m)}\\
\varphi_{(m',n)} & \varphi_{(m',m)}
\end{pmatrix}\colon {(n,m)}_B\to {(n',m')}_B$ in $\mathcal{C}$, 

\begin{enumerate}
\renewcommand{\labelenumi}{$\mathrm{(\roman{enumi})}$}
\item
$\delta_{{(n,m)}_B}$ is a chain homotopy equivalence,

\item
if $\varphi$ is lower triangular, we have the equality 
$j\mu'_1(\varphi)\delta_{{(n,m)}_B}=\delta_{{(n',m')}_B}\eta(\varphi)$, 

\item
if $\varphi$ is upper triangular, 
there is a unique chain homotopy between 
$j\mu'_1(\varphi)\delta_{{(n,m)}_B}$ and $\delta_{{(n',m')}_B}\eta(\varphi)$. 
Namely since we have the equality 
$$\displaystyle{j\mu'_1(\varphi)\delta_{{(n,m)}_B}-
\delta_{{(n',m')}_B}\eta(\varphi)=
\begin{pmatrix}0 & -\varphi_{(n',m)}\\ 0 & 0 \end{pmatrix}},$$ 
the map 
$$\begin{pmatrix}0 & -\varphi_{(n',m)} \end{pmatrix}\colon 
B^{\oplus n}\oplus B^{\oplus m}\to B^{\oplus n'}$$
gives a chain homotopy between $j\mu'_1(\varphi)\delta_{{(n,m)}_B}$ and 
$\delta_{{(n',m')}_B}\eta(\varphi)$. 
$$\xymatrix{
B^{\oplus n}\oplus B^{\oplus m} 
\ar[r]^{\footnotesize{\begin{pmatrix}0 & -\varphi_{(n',m)}\end{pmatrix}}} 
\ar[d]_{\footnotesize{\begin{pmatrix}gE_n & 0 \\ 0 & E_m\end{pmatrix}}} & 
B^{\oplus n'} \ar[d]^{gE_n}\\
B^{\oplus n}\oplus B^{\oplus m} \ar[ru]_{\!\!\!\!\!\!\!\!\!\!\footnotesize{\begin{pmatrix}0 &\!\!\!\!\!\!\!\! -\varphi_{(n',m)}\end{pmatrix}}} 
\ar[r]_{\footnotesize{\begin{pmatrix}0 & -g\varphi_{(n',m)}\end{pmatrix}}} & 
B^{\oplus n'}.
}$$
\end{enumerate}
Thus we establish a theory of homotopy natural transformations. 
Then it turns out that $\delta$ induces a simplicial homotopy 
natural transformation $\eta\Rightarrow_{\operatorname{simp}}j\mu_1$. 
(For the definition of simplicial homotopy natural transformations, 
see Definition~\ref{df:simplicial hom nat trans} below.) 
Therefore by the theory below, 
it turns out that 
there exists a zig-zag sequence of 
simplicial natural transformations which connects 
$\eta$ and $j\mu_1$. 
Thus finally we obtain the fact that $\eta$ is homotopic to $0$. 
We complete the proof of zero map theorem.
\qed

\bigskip
\noindent
The rest of this subsection, we will establish a 
theory of homotopy natural transformations and 
justify the argument above.

\subsubsection*{Conventions.}
\label{conv:chain complexes conv}

\noindent
For simplicity, we set $\mathcal{E}=\operatorname{Ch}_b(\mathcal{M}_B(1))$. 
The functor $C\colon \mathcal{E} \to \mathcal{E}$ 
is given by sending a chain complex $x$ in $\mathcal{E}$ to 
$Cx:=\operatorname{Cone}\operatorname{id}_x$ the canonical mapping cone 
of the identity morphism of $x$. 
Namely the degree $n$ part of $Cx$ is ${(Cx)}_n=x_{n-1}\oplus x_n$ and 
the degree $n$ boundary morphism $d_n^{Cx}\colon {(Cx)}_n\to {(Cx)}_{n-1}$ 
is given by 
$\displaystyle{d^{Cx}_n=
\begin{pmatrix} 
-d^x_{n-1} & 0\\
-\operatorname{id}_{x_{n-1}} & d_n^x
\end{pmatrix}}$. 
For any complex $x$, 
we define 
$\iota_x\colon x \to Cx$ and 
$r_x\colon CCx \to Cx$  
to be chain morphisms 
by setting 
${(\iota_x)}_n={
\begin{pmatrix}
0\\
\operatorname{id}_{x_n}
\end{pmatrix}}$ and 
${(r_x)}_n={
\begin{pmatrix}
0 &\!\! \operatorname{id}_{x_{n-1}} &\!\! \operatorname{id}_{x_{n-1}} &\!\! 0\\
0 &\!\! 0 &\!\! 0 &\!\! \operatorname{id}_{x_n}
\end{pmatrix}}
$.

\smallskip
\noindent
Let $f$, $g\colon x\to y$ be a pair of 
chain morphisms $f$, $g\colon x\to y$ in $\mathcal{E}$. 
Recall that a {\it chain homotopy from $f$ to $g$} is 
a family of morphisms $\{h_n\colon x_n\to y_{n+1} \}_{n\in\mathbb{Z}}$ in 
$\mathcal{M}_B(1)$ indexed by the set of integers 
such that it satisfies the equality
\begin{equation}
\label{eq:chain homotopy}
d^y_{n+1}h_n+h_{n-1}d_n^x=f_n-g_n
\end{equation}
for any integer $n$. 
Then we define $H\colon Cx\to y$
to be a chain morphism by setting 
$\displaystyle{H_n:=\begin{pmatrix}-h_{n-1}& f_n-g_n \end{pmatrix}}$ 
for any integer $n$. 
We can show that the equality 
\begin{equation}
\label{eq:C-homotopy}
f-g=H\iota_x. 
\end{equation}
Conversely if we give a chain map $H\colon Cx\to y$ 
which satisfies the equality $\mathrm{(\ref{eq:C-homotopy})}$ above, 
it provides a chain homotopy from $f$ to $g$. 
We denote this situation by $H\colon f\Rightarrow_C g$ and 
we say that $H$ is a {\it $C$-homotopy from $f$ to $g$}. 
We can also show that for any complex $x$ in $\mathcal{E}$, 
$r_x$ is a $C$-homotopy from $\operatorname{id}_{Cx}$ to $0$.

\smallskip
\noindent
Let 
$[f\colon x\to x']$ and 
$[g\colon y \to y']$ be 
a pair of objects in $\mathcal{E}^{[1]}$ 
the morphisms category of $\mathcal{E}$. 
A ({\it $C$-}){\it homotopy commutative square} 
({\it from $[f\colon x\to x']$ to 
$[g\colon y\to y']$}) is a triple $(a,b,H)$ consisting of 
chain morphisms $a\colon x\to y $, $b\colon x'\to y'$ and 
$H\colon Cx\to y' $ in $\mathcal{E}$ such that 
$H\iota_x=ga-bf$. 
Namely $H$ is a $C$-homotopy from $ga$ to $bf$.

\smallskip
\noindent
Let $[f\colon x\to x']$, $[g\colon y\to y']$ and 
$[h\colon z\to z']$ be a triple of objects in $\mathcal{E}^{[1]}$ 
and let $(a,b,H)$ and $(a',b',H')$ be homotopy commutative squares 
from $[f\colon x\to x']$ to $[g\colon y\to y']$ and 
from $[g\colon y\to y']$ to $[h\colon z\to z']$ respectively. 
Then we define $(a',b',H')(a,b,H)$ to be a homotopy commutative square 
from $[f\colon x\to x']$ to $[h\colon z\to z']$ by setting
\begin{equation}
\label{eq:comp of hcd}
(a',b',H')(a,b,H):=(a'a,b'b,H'\star H)
\end{equation}
where $H'\star H$ is a $C$-homotopy from $ha'a$ to $b'bf$ 
given by the formula 
\begin{equation}
\label{eq:homotopy star}
H'\star H:=b'H+H'Ca.
\end{equation}

\smallskip
\noindent
We define $\mathcal{E}_h^{[1]}$ to be a category whose objects are 
morphisms in $\mathcal{E}$ and whose morphisms are 
homotopy commutative squares and compositions of morphisms 
are give by the formula $\mathrm{(\ref{eq:comp of hcd})}$ and 
we define $\mathcal{E}^{[1]}\to \mathcal{E}_h^{[1]}$ 
to be a functor by sending an object $[f\colon x\to x']$ to $[f\colon x\to x']$ 
and a morphism $(a,b)\colon[f\colon x\to x']\to [g\colon y\to y'] $ to 
$(a,b,0)\colon[f\colon x\to x']\to [g\colon y\to y']$.  
By this functor, we regard $\mathcal{E}^{[1]}$ as a subcategory of $\mathcal{E}_h^{[1]}$.

\smallskip
\noindent
We define $Y\colon \mathcal{E}_h^{[1]}\to\mathcal{E}$ to be a functor 
by sending an object $[f\colon x\to y]$ to $Y(f):=y\oplus Cx$ and 
a homotopy commutative square $(a,b,H)\colon [f\colon x\to y]\to 
[f'\colon x'\to y']$ to $\displaystyle{Y(a,b,H):=
\begin{pmatrix}b & -H\\ 0 &Ca \end{pmatrix}}$.

\smallskip
\noindent
We write $s$ and $t$ for the functors $\mathcal{E}_h^{[1]}\to \mathcal{E} $  
which sending an object $[f\colon x\to y] $ to $x$ and $y$ respectively. 
We define $j_1\colon s\to Y$ and $j_2\colon t \to Y$ to be natural 
transformations by setting 
${j_1}_f:=\begin{pmatrix}f\\ -\iota_x \end{pmatrix}  $ and 
${j_2}_f:=\begin{pmatrix}\operatorname{id}_y\\ 0\end{pmatrix}$ respectively for any object $[f\colon x\to y]$ in $\mathcal{E}_h^{[1]}$.

\begin{df}[Homotopy natural transformations]
\label{df:homotopy natural transformation}
Let $\mathcal{I}$ be a category and let $f$, $g\colon \mathcal{I}\to\mathcal{E}$ 
be a pair of functors. 
A {\it homotopy natural transformation} ({\it from $f$ to $g$}) 
is consisting of a family of morphims 
$\{\theta_i\colon f_i\to g_i\}_{i\in\operatorname{Ob}\mathcal{I}}$ 
indexed by the class of objects of $\mathcal{I}$ and 
a family of $C$-homotopies 
$\{\theta_a\colon g_a\theta_a\Rightarrow_C 
\theta_jf_a\}_{a\colon i\to j\in \operatorname{Mor} \mathcal{I}}$ 
indexed by the class of morphisms of $\mathcal{I}$ 
such that for any object $i$ of $\mathcal{I}$, 
$\theta_{{\operatorname{id}_i}}=0$ and 
for any pair of composable morphisms $i\overset{a}{\to}j\overset{b}{\to}k$ 
in $\mathcal{I}$, 
$\theta_{ba}=\theta_b\star\theta_a(=g_b\theta_a+\theta_bCf_a)$. 
We denote this situation by $\theta\colon f\Rightarrow g$. 
For a usual natural transformation $\kappa\colon f\to g$, 
we regard it as a homotopy natural transformation 
by setting $\kappa_{a}=0$ for any morphism $a\colon i\to j$ in $\mathcal{I}$.

\smallskip
\noindent
Let $h$ and $k$ be another functors from $\mathcal{I}$ to $\mathcal{E}$ and 
let $\alpha\colon f\to g$ and $\gamma\colon h\to k$ be 
natural transformations and $\beta\colon g\Rightarrow h$ a homotopy natural 
transformation. 
We define $\beta\alpha\colon f\Rightarrow h$ and 
$\gamma\beta\colon g\Rightarrow k$ to be homotopy natural transformations 
by setting for any object $i$ in $\mathcal{I}$, 
${(\beta\alpha)}_i=\beta_i\alpha_i$ and 
${(\gamma\beta)}_i=\gamma_i\beta_i$ and 
for any morphism $a\colon i\to j$ in $\mathcal{I}$, 
${(\beta\alpha)}_a:=\beta_aC(\alpha_i)$ and 
${(\gamma\beta)}_a=\gamma_j\beta_a$. 
\end{df}

\begin{ex}[Homotopy natural transformations]
\label{ex:homotopy natural transformation}
We define $\epsilon\colon s\Rightarrow t$ and 
$p\colon Y\Rightarrow t$ to be homotopy natural transformations 
between functors $\mathcal{E}_h^{[1]}\to \mathcal{E}$ 
by setting for any object $[f\colon x\to y]$ in $\mathcal{E}_h^{[1]}$, 
$\epsilon_{f}:=f\colon x\to y$ and 
$p_{f}:=\begin{pmatrix}\operatorname{id}_y & 0\end{pmatrix}\colon 
Y(f)=y\oplus Cx\to y$ 
and for a homotopy commutative square 
$(a,b,H)\colon [f\colon x\to y]\to [f'\colon x'\to y']$, 
$\epsilon_{(a,b,H)}:=H\colon f'a\Rightarrow_C bf$ and 
$p_{(a,b,H)}:=\begin{pmatrix}0 & -Hr_x\end{pmatrix}\colon 
b\begin{pmatrix}\operatorname{id}_y & 0\end{pmatrix}\Rightarrow_C 
\begin{pmatrix}\operatorname{id}_{y'} & 0\end{pmatrix}
\begin{pmatrix}b & -H\\ 0 & Ca \end{pmatrix}$. 
Then we have the commutative diagram of homotopy natural transformations.
$$\xymatrix{
s \ar[r]^{{j_1}} \ar[rd]_{\epsilon} & Y \ar[d]_{p} & t \ar[l]_{{j_2}} \ar[ld]^{\operatorname{id}_t}\\
& t. 
}$$
Here we can show that for any object $[f\colon x\to  y]$ in $\mathcal{E}_h^{[1]}$, $p_f$ and ${j_2}_f$ are chain homotopy equivalences. 
In particular if $f$ is a chain homotopy equivalence, then 
${j_1}_f$ is also a chain homotopy equivalence.
\end{ex}

\begin{df}[Mapping cylinder functor on $\operatorname{Nat}_h(\mathcal{E}^{\mathcal{I}})$]
\label{df:Nat_h}
Let $\mathcal{I}$ be a small category. 
We will define $\operatorname{Nat}_h(\mathcal{E}^{\mathcal{I}})$ 
{\it the category of homotopy natural transformations} 
({\it between the functors from $\mathcal{I}$ to $\mathcal{E}$}) as follows. 
An object in $\operatorname{Nat}_h(\mathcal{E}^{\mathcal{I}})$ 
is a triple $(f,g,\theta)$ consisting of 
functors $f$, $g\colon \mathcal{I}\to \mathcal{E}$ and 
a homotopy natural transformation $\theta\colon f\Rightarrow g$. 
A morphism $(a,b)\colon (f,g,\theta)\to (f',g',\theta')$ 
is a pair of natural transformations $a\colon f\to f'$ and $b\colon g\to g'$ 
such that $\theta'a=b\theta$. 
Compositions of morphisms is given by componentwise compositions of natural 
transformations.

\smallskip
\noindent
We will define functors $S,\ T,\ Y\colon\operatorname{Nat}_h
(\mathcal{E}^{\mathcal{I}})\to \mathcal{E}^{\mathcal{I}}$ and 
natural transformations $J_1\colon S\to Y$ and $J_2\colon T\to Y$ as follows. 
For an object $(f,g,\theta)$ and a morphism 
$(\alpha,\beta)\colon (f,g,\theta)\to (f',g',\theta')$ 
in $\operatorname{Nat}_h(\mathcal{E}^{\mathcal{I}})$ and 
an object $i$ and 
a morphism $a\colon i\to j$ in $\mathcal{I}$, we set 
$$S(f,g,\theta):=f,\ S(\alpha,\beta):=\alpha,$$ 
$$T(f,g,\theta):=g,\ T(\alpha,\beta):=\beta,$$ 
$${Y(f,g,\theta)}_i(={Y(\theta)}_i):=Y(\theta_i)(=g_i\oplus C(f_i)),$$ 
$$\displaystyle{{Y(f,g,\theta)}_a(={Y(\theta)}_a):=Y(f_a,g_a,\theta_a)
\left(=\begin{pmatrix}g_a & -\theta_a\\ 0& Cf_a \end{pmatrix}\right)},\ 
\displaystyle{{Y(\alpha,\beta)}_i:=\begin{pmatrix}\beta_i & 0\\ 0 & C(\alpha_i) \end{pmatrix}},$$
$${{J_1}_{(f,g\theta)}}_i:={j_1}_{\theta_i},\ {{J_2}_{(f,g\theta)}}_i:={j_2}_{\theta_i}.$$
In particular 
for an object $(f,g,\theta)$ in $\operatorname{Nat}_h
(\mathcal{E}^{\mathcal{I}})$ 
if for any object $i$ of $\mathcal{I}$, 
$\theta_i$ is a chain homotpy equivalence, 
then there exists a zig-zag diagram which connects $f$ to $g$, 
$f\overset{J_1}{\to} Y(\theta) \overset{J_2}{\leftarrow} g$ such that 
for any object $i$, ${J_1}_i$ and ${J_2}_i$ are chain homotopy equivalences.
\end{df}

\begin{df}[Simplicial homotopy natural transformations]
\label{df:simplicial hom nat trans}
Let $\mathcal{J}$ 
be a simplicial object in the category of small categories which we 
call shortly a simplicial small category 
and 
let $f$, $g\colon \mathcal{J}\to \operatorname{Ch}_b(S_{\cdot}\mathcal{M}_B(1))$ 
be simplicial functors. 
Recall that a {\it simplicial natural transformaion} 
({\it from $f$ to $g$}) is 
a family of natural transformations $\{\rho_n\colon f_n\to g_n\}_{n\geq 0}$ 
indexed by non-negative integers such that for any morphism 
$\varphi\colon [n]\to [m]$, 
we have the equality $\rho_nf_{\varphi}=g_{\varphi}\rho_m$. 

\smallskip
\noindent
A {\it simplicial homotopy natural transformation} 
({\it from $f$ to $g$}) is a family of 
homotopy natural transformations 
$\{\theta_n\colon f_n\Rightarrow g_n\}_{n\geq 0}$ 
indexed by non-negative integers such that 
for any morphism $\varphi\colon [n]\to [m]$, 
we have the equality $\theta_nf_{\varphi}=g_{\varphi}\theta_m$. 
We denote this situation by $\theta\colon f\Rightarrow_{\operatorname{simp}} g$. 

\smallskip
\noindent
For a simplicial homotopy natural transformation 
$\theta\colon f\Rightarrow_{\operatorname{simp}} g$, we will define 
$\mathcal{Y}(\theta)\colon \mathcal{J}\to \operatorname{Ch}_b(S_{\cdot}\mathcal{M}_B(1))$ and 
$\mathfrak{J}_1\colon f\to \mathcal{Y}(\theta)$ and 
$\mathfrak{J}_2\colon g\to \mathcal{Y}(\theta)$ 
to be a simplicial functor 
and simplicial natural transformations respectively as follows. 
For any $[n]$ and any morphism $\varphi\colon [m]\to [n]$, 
we set 
${\mathcal{Y}(\theta)}_n:=Y(\theta_n)$, ${\mathfrak{J}_1}_n:={J_1}_{\theta_n}$, 
${\mathfrak{J}_2}_n:={J_2}_{\theta_n}$  and 
${\mathcal{Y}(\theta)}_{\varphi}:=Y(f_{\varphi},g_{\varphi})$. 
In particular 
if for any non-negative integer $n$, 
any object $j$ of $\mathcal{J}_n$, ${\theta_n}_j$ 
is a chain homotpy equivalence, 
then there exists a zig-zag sequence which connects $f$ to $g$, 
$f\overset{\mathfrak{J}_1}{\to} \mathcal{Y}(\theta) 
\overset{\mathfrak{J}_2}{\leftarrow} g$ such that 
for any non-negative integer $n$ and any object $j$, ${{J_1}_n}_j$ and 
${{J_2}_n}_j$ are chain homotopy equivalences.
\qed
\end{df}

\section{Motivic Gersten's conjecture}

In this last section, I briefly explain 
my recent work of motivic Gernsten's conjecture 
in \cite{Moc16b}.

\subsection{Motivic zero map theorem}
\label{subsec:Motivic zero map theorem}

We denote the category of small dg-categories over $\mathbb{Z}$ 
the ring of integers 
by $\operatorname{dgCat}$ and let 
${\mathcal{M}\!\!ot}^{\operatorname{add}}_{\operatorname{dg}}$ and 
${\mathcal{M}\!\!ot}^{\operatorname{loc}}_{\operatorname{dg}}$ 
be symmetric monoidal stable 
model categories of additve noncommutative motives and 
localizing motives over $\mathbb{Z}$ respectively. 
(See \cite[\S 7]{CT12}.) 
We denote the homotopy category of a model category $\mathcal{M}$ by 
$\operatorname{Ho}(\mathcal{M})$. 
There are functors from $\operatorname{ExCat}$ the 
category of small exact categories to $\operatorname{dgCat}$ 
which sending a small exact category $\mathcal{E}$ to 
its bounded dg-derived category $\mathcal{D}_{\operatorname{dg}}^b(\mathcal{E})$ 
(see \cite[\S 4.4]{Kel06}.) 
and the universal functors 
$\mathcal{U}_{\operatorname{add}}\colon\operatorname{dgCat}\to 
\operatorname{Ho}({\mathcal{M}\!\!ot}^{\operatorname{add}}_{\operatorname{dg}})$ 
and 
$\mathcal{U}_{\operatorname{loc}}\colon\operatorname{dgCat}\to 
\operatorname{Ho}
({\mathcal{M}\!\!ot}^{\operatorname{loc}}_{\operatorname{dg}})$. 
We denote the compositions of these functors 
$\operatorname{ExCat}\to
\operatorname{Ho}({\mathcal{M}\!\!ot}^{\operatorname{add}}_{\operatorname{dg}})$ and 
$\operatorname{ExCat}\to\operatorname{Ho}({\mathcal{M}\!\!ot}^{\operatorname{loc}}_{\operatorname{dg}})$ 
by $M_{\operatorname{add}}$ and $M_{\operatorname{loc}}$ respectively. 
Then we can ask the following question:

\begin{con}[Motivic Gersten's conjecture provisonal version]
\label{conj:motivic gersten conj provisonal}
For any commutative regular local ring $R$ and for any integers 
$1\leq p\leq \dim R$, the inclusion functor 
$\mathcal{M}_R^{p-1}\hookrightarrow \mathcal{M}_R^p$ induces the zero morphism 
$$M_{\#}(\mathcal{M}_R^{p-1})\to M_{\#}(\mathcal{M}_R^p)$$
in $\operatorname{Ho}({\mathcal{M}\!\!ot}^{\#}_{\operatorname{dg}})$ 
where $\#\in\{\operatorname{add}, \operatorname{loc}\}$.
\end{con}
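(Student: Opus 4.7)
The plan is to adapt the two-step proof of Sections~\ref{sec:split epimorphism theorem} and \ref{sec:zero map theorem} to the motivic setting, exploiting the universal properties of $\mathcal{U}_{\operatorname{add}}$ and $\mathcal{U}_{\operatorname{loc}}$ to transport the Waldhausen-level arguments. First, the reductions (\ref{eq:Gersten strategy 1}), (\ref{eq:Gersten strategy 2}) are instances of filtered colimits, resolution, and d\'evissage of dg-enhanced derived categories, and all three are preserved by both $M_{\operatorname{add}}$ and $M_{\operatorname{loc}}$. Applying them reduces the conjecture (exactly as in Section~\ref{sec:Strategy of proof}) to showing that $\mathcal{P}_{R/\mathfrak{f}_SR}\hookrightarrow \mathcal{M}_{R/\mathfrak{f}_{S\smallsetminus\{s\}}R}(1)$ induces the zero morphism in $\operatorname{Ho}({\mathcal{M}\!\!ot}^{\#}_{\operatorname{dg}})$. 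A motivic Gillet--Waldhausen theorem is available, since the canonical functor $\operatorname{Ch}_b(\mathcal{M}_B(1))\to \mathcal{D}_{\operatorname{dg}}^b(\mathcal{M}_B(1))$ is a Morita equivalence and $\mathcal{U}_{\#}$ inverts such equivalences; thus the commutative square of Section~\ref{sec:Strategy of proof} transfers to $\operatorname{Ho}({\mathcal{M}\!\!ot}_{\operatorname{dg}}^{\#})$.

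For the motivic split epimorphism theorem, I would reuse verbatim the section $x \mapsto [0\to x]$ of $\operatorname{H}_0\colon \mathcal{D}\to \mathcal{M}_{B,\operatorname{red}}^{gB}(1)$. This section is defined at the level of exact categories, so it splits $\operatorname{H}_0$ under any functor out of $\operatorname{ExCat}$. The horizontal arrows in the diagram of Section~\ref{sec:split epimorphism theorem} are derived equivalences of dg-categories, which $\mathcal{U}_{\#}$ inverts by construction. Hence $M_{\#}(\operatorname{H}_0)$ remains a split epimorphism in $\operatorname{Ho}({\mathcal{M}\!\!ot}^{\#}_{\operatorname{dg}})$.

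The subtler step is the motivic zero map theorem. The plan is to promote the modified simplicial object $i^{\bigtriangleup}S_{\cdot}^{\bigtriangledown}\mathcal{C}$ and the exotic associations $\mu_1,\mu_2$ of Section~\ref{sec:zero map theorem} to morphisms in $\operatorname{Ho}({\mathcal{M}\!\!ot}^{\#}_{\operatorname{dg}})$ via a dg-enhancement, and then invoke the motivic additivity theorem --- built into the definition of $M_{\operatorname{add}}$, and a consequence of the localization axiom for $M_{\operatorname{loc}}$, cf.~\cite{CT12} --- to conclude $M_{\#}(j\mu_1) = M_{\#}(j\mu_2)$. The right-hand side vanishes because $\mu_2$ factors through quasi-acyclic complexes and $\mathcal{U}_{\#}$ inverts quasi-isomorphisms; so the whole argument comes down to identifying $M_{\#}(\eta)$ with $M_{\#}(j\mu_1)$.

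The main obstacle is precisely this last identification. In Section~\ref{sec:zero map theorem} it was obtained via the simplicial homotopy natural transformation $\delta\colon \eta\Rightarrow_{\operatorname{simp}}j\mu_1$ together with the mapping cylinder $\mathcal{Y}(\theta)$ of Definition~\ref{df:Nat_h}. At the motivic level one expects that a zig-zag of dg-natural transformations whose components are quasi-isomorphisms descends to an equality of morphisms in $\operatorname{Ho}({\mathcal{M}\!\!ot}^{\#}_{\operatorname{dg}})$, morally because $\mathcal{U}_{\#}$ inverts Morita equivalences and such a zig-zag is an equivalence in the hammock localization of $\operatorname{dgCat}$. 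Making this rigorous requires producing a genuinely functorial dg-lift of $\mathcal{Y}(\theta)$ together with the simplicial data $\mathfrak{J}_1,\mathfrak{J}_2$, so that the zig-zag $f\overset{\mathfrak{J}_1}{\to}\mathcal{Y}(\theta)\overset{\mathfrak{J}_2}{\leftarrow}g$ survives as an honest diagram of dg-functors. Once that dg-functorial framework is in place, the proof of Section~\ref{sec:zero map theorem} transfers essentially verbatim, yielding the motivic Gersten conjecture in both the additive and localizing settings.
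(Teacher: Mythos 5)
The statement you are trying to prove is presented in the paper as a \emph{conjecture}, not a theorem, and the opening lines of \S\ref{subsec:Nilpotent invariant motives} explain exactly why the strategy you outline cannot work: the d\'evissage theorem fails for both $M_{\operatorname{add}}$ and $M_{\operatorname{loc}}$. Your proposal asserts that filtered colimits, resolution, and d\'evissage ``are preserved by both $M_{\operatorname{add}}$ and $M_{\operatorname{loc}}$,'' but the third claim is false. D\'evissage is a special feature of algebraic $K$-theory; it is not satisfied by general additive or localizing invariants (Hochschild homology and cyclic homology, for instance, distinguish a ring from nilpotent thickenings of it, whereas d\'evissage would force them to agree). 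Since $M_{\operatorname{add}}$ and $M_{\operatorname{loc}}$ are the universal such invariants, they cannot satisfy d\'evissage, so the arrow \textbf{III} in $\mathrm{(\ref{eq:Gersten strategy 2})}$ does not transport to $\operatorname{Ho}({\mathcal{M}\!\!ot}^{\#}_{\operatorname{dg}})$. This is precisely the obstruction to deducing Conjecture~\ref{conj:motivic gersten conj provisonal} from Proposition~\ref{prop:motivic zero map theorem}.

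What the paper actually proves (Theorem~\ref{thm:motivic Gersten conjecture}) is a version of this statement after a further Bousfield localization of ${\mathcal{M}\!\!ot}_{\operatorname{dg}}^{\operatorname{loc}}$ at the class $\mathcal{N}$ of morphisms coming from nilpotent immersions, yielding ${\mathcal{M}\!\!ot}^{\operatorname{nilp}}_{\operatorname{dg}}$ in which d\'evissage is forced to hold by construction. Your remaining steps --- the motivic split-epimorphism argument via the section $x\mapsto[0\to x]$, the motivic Gillet--Waldhausen reduction via Morita equivalence, and the motivic zero-map theorem using the additivity encoded in $M_{\operatorname{add}}$/$M_{\operatorname{loc}}$ (cf.\ Proposition~\ref{prop:motivic zero map theorem} and Remark~\ref{rem:motivic zero map theorem}) --- are all in line with the paper; the irreparable step is the invocation of motivic d\'evissage for $\#\in\{\operatorname{add},\operatorname{loc}\}$.
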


\smallskip
\noindent
Since in the proof of zero morphism theorem in \S~\ref{sec:zero map theorem}, 
what we just using are basically the resolution theorem and 
the additivity theorem, 
by mimicking the proof of zero morphism theorem, 
we obtain the following result:

\begin{prop}[Motivic zero morphism theorem]
\label{prop:motivic zero map theorem}
Let $B$ be a commutative noetherian ring with $1$ and let $g$ 
be an element in $B$ such that $g$ is a non-zero divisor and 
contained in the Jacobson radical of $B$. 
Assume that every finitely generated projective $B$-modules are free. 
Then the inclusion functor $\mathcal{P}_{B/gB}\hookrightarrow 
\mathcal{M}_B(1)$ induces the zero morphism 
$$M_{\#}(\mathcal{P}_{B/gB})\to M_{\#}(\mathcal{M}_B(1))$$
in ${\mathcal{M}\!\!ot}^{\#}_{\operatorname{dg}}$ 
where $\#\in \{\operatorname{add}, \operatorname{loc}\}$.
\qed
\end{prop}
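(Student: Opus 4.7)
The plan is to mimic the proof of the zero map theorem of \S\ref{sec:zero map theorem} step by step, replacing Waldhausen $K$-theory with the universal additive or localizing invariant $M_{\#}$. The hypothesis that every finitely generated projective $B$-module is free is precisely what allowed the structure theorem Lemma~\ref{lem:structure of C} to describe every object of $\mathcal{C}$ as some ${(n,m)}_B$; the hypothesis that $g$ is a non-zero divisor in the Jacobson radical is what was used in the split epimorphism argument and in Lemma~\ref{lem:structure of C}$\mathrm{(1)}$ (Nakayama's lemma). Under these assumptions, all the auxiliary exact categories $\mathcal{C}$, $\mathcal{D}$, $\mathcal{B}$ and the associated structural lemmas in \S\ref{sec:structure of C} carry over verbatim, as they are purely categorical statements about $B$-modules.

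First I would pass to $M_{\#}$ of the bounded dg-derived categories and observe that the Gillet--Waldhausen type identification $M_{\#}(\mathcal{M}_B(1)) \simeq M_{\#}(\mathrm{Ch}_b(\mathcal{M}_B(1)), \mathrm{qis})$ holds for $\# \in \{\mathrm{add}, \mathrm{loc}\}$, since both additive and localizing invariants are Morita invariant on dg-derived categories (\cite{CT12}). Thus the factorization
\begin{equation*}
M_{\#}(\mathcal{P}_{B/gB}) \xleftarrow{M_{\#}(\operatorname{H}_0)} M_{\#}(\mathcal{C}) \xrightarrow{M_{\#}(\eta)} M_{\#}(\mathrm{Ch}_b(\mathcal{M}_B(1))) \simeq M_{\#}(\mathcal{M}_B(1))
\end{equation*}
persists and, by exactly the argument of \S\ref{sec:split epimorphism theorem} (the section of $\operatorname{H}_0$ constructed by viewing a reduced module as the complex $[0 \to x]$ works in any exact category, and the derived equivalences cited from \cite{Moc13a, Moc16a} are purely categorical), $M_{\#}(\operatorname{H}_0)$ is a split epimorphism. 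It therefore suffices to show $M_{\#}(\eta) = 0$.

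For this, I would construct the lax associations $\mu'_1, \mu'_2 \colon \mathcal{C} \to \mathcal{B}$ of \S\ref{sec:zero map theorem} sending ${(n,m)}_B$ to ${(n,0)}_B$ and ${(0,n)}_B$ respectively, then rectify them to honest simplicial functors $\mu_1, \mu_2 \colon i^{\bigtriangleup} S^{\bigtriangledown}_{\cdot}\mathcal{C} \to iS_{\cdot}\mathcal{B}$ using the same upper-triangulation trick. The additivity theorem is available in both $\mathcal{M}\!ot^{\mathrm{add}}_{\mathrm{dg}}$ and $\mathcal{M}\!ot^{\mathrm{loc}}_{\mathrm{dg}}$ (it is, after all, one of the defining axioms of additive invariants), so the chain of equalities
\begin{equation*}
M_{\#}(\eta) = M_{\#}(j\mu_1) = M_{\#}(j)M_{\#}(\mathfrak{s}_1 \times \mathfrak{s}_2)^{-1} M_{\#}(\mathfrak{s}_1 \times \mathfrak{s}_2 \mu_1) = M_{\#}(j\mu_2) = 0
\end{equation*}
goes through once the simplicial homotopies $\eta \Rightarrow_{\mathrm{simp}} j\mu_1$ and $0 \Rightarrow_{\mathrm{simp}} j\mu_2$ have been produced.

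The main obstacle, as in the original proof, is that the naive associations $\mu'_i$ are not $1$-functorial and the putative transformation $\delta \colon \eta \Rightarrow j\mu_1$ is only a homotopy-commutative diagram, not a strict natural transformation. The hard part of the motivic version is to verify that the theory of homotopy natural transformations of \S\ref{subsec:Homotopy natural transformation} — the mapping cylinder $\mathcal{Y}(\theta)$ with its zig-zag $f \xrightarrow{\mathfrak{J}_1} \mathcal{Y}(\theta) \xleftarrow{\mathfrak{J}_2} g$ of pointwise chain-homotopy equivalences — still gives equal maps after applying $M_{\#}$. This reduces to showing that $M_{\#}$ sends pointwise chain-homotopy equivalences (hence pointwise quasi-isomorphisms at the dg-level) to equivalences, which holds because both additive and localizing invariants factor through the dg-derived category. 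Once this compatibility is verified, the argument concludes exactly as in \S\ref{sec:zero map theorem}, yielding $M_{\#}(\eta) = 0$ and therefore the vanishing of the composite $M_{\#}(\mathcal{P}_{B/gB}) \to M_{\#}(\mathcal{M}_B(1))$.
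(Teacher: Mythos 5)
Your proposal takes essentially the same approach as the paper: the paper's proof is precisely the observation that the argument of \S\ref{sec:zero map theorem} transfers because it relies only on the resolution and additivity theorems, both available for additive and localizing invariants, together with the two compatibility facts collected in Remark~\ref{rem:motivic zero map theorem}. You explicitly state the first of those facts (that $M_{\#}$ identifies exact functors to $\operatorname{Ch}_b(\mathcal{F})$ connected by a zig-zag of pointwise quasi-isomorphisms), but you leave implicit the second — the compatibility of $M_{\operatorname{loc}}$ with the Segal--Waldhausen $S_{\cdot}$-construction via the identification $p_{!}\mathcal{U}_{\operatorname{loc}}(S_{\cdot}\mathcal{A})=\mathcal{U}_{\operatorname{loc}}(\mathcal{A})[1]$ from \cite{Tab08} — which is what licenses reading off the simplicial additivity argument of \S\ref{sec:zero map theorem} at the level of motives; making that input explicit would complete the argument.
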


\begin{rem}
\label{rem:motivic zero map theorem}
Strictly saying, in the proof above, 
we shall moreover utilize the following two facts.

\begin{enumerate}
\item
For any pair of exact functors $f$, $g\colon \mathcal{E}\to
\operatorname{Ch}_b(\mathcal{F})$ 
from an exact category $\mathcal{E}$ to 
the category of bounded chain complexes on an exact category $\mathcal{F}$, 
if there exists a zig-zag sequence 
of natural equivalences which connects $f$ and $g$ and 
whose components are quasi-isomorphisms, 
then $f$ and $g$ induce the same morphisms $M_{\#}(f)=M_{\#}(g)$ 
in ${\mathcal{M}\!\!ot}^{\#}_{\operatorname{dg}}$ 
where $\#\in\{\operatorname{add},\ \operatorname{loc}\}$.

\item
Let $\Delta$ be the simplicial category and let $e$ be the final object 
in the category of small categories and let 
$p\colon \Delta \to e$ be the projection functor. 
We denote the derivator associated with the model category 
${\mathcal{M}\!\!ot}^{\operatorname{loc}}_{\operatorname{dg}}$ by 
$\operatorname{Mot}^{\operatorname{loc}}_{\operatorname{dg}}$. 
For a small dg category $\mathcal{A}$, there is an 
object $\mathcal{U}_{\operatorname{loc}}(S_{\cdot}\mathcal{A})$ in 
$\operatorname{Mot}^{\operatorname{loc}}_{\operatorname{dg}}(\Delta)$ 
where $S_{\cdot}$ is the 
Segal-Waldhausen $S$-construction. 
We have the canonical isomorphism in 
$\operatorname{Ho}({\mathcal{M}\!\!ot}^{\operatorname{loc}}_{\operatorname{dg}})=
\operatorname{Mot}^{\operatorname{loc}}_{\operatorname{dg}}(e)$
$$p_{!}\mathcal{U}_{\operatorname{loc}}(S_{\cdot}\mathcal{A})=
\mathcal{U}_{\operatorname{loc}}(\mathcal{A})[1].$$
(See \cite[Notation 11.11, Theorem 11.12]{Tab08}.)

\end{enumerate}

\end{rem}

\subsection{Nilpotent invariant motives}
\label{subsec:Nilpotent invariant motives}

A problem to obtain the motivic Gersten's conjecture from 
Proposition~\ref{prop:motivic zero map theorem} is that in the proof 
in \S~\ref{sec:Strategy of proof}, 
we utilze the d\'evissage theorem. (see the isomorphism 
$\mathrm{(\ref{eq:Gersten strategy 2})}$ \textbf{III}.) 
Thus if we imitate the proof, then we need to modify 
statement of the conjecture 
and we need to construct a new stable model category of 
non-commutative motives 
which we denote by 
${\mathcal{M}\!\!ot}^{\operatorname{nilp}}_{\operatorname{dg}}$ 
and which we construct by localizing 
${\mathcal{M}\!\!ot}^{\operatorname{loc}}_{\operatorname{dg}}$. 

\smallskip
\noindent
We briefly recall the construction of the stable model category 
of localizing non-commutative motives 
${\mathcal{M}\!\!ot}^{\operatorname{loc}}_{\operatorname{dg}}$ 
from \cite{Tab08}. 
First notice that 
the category $\operatorname{dgCat}$ carries a cofibrantly generated 
model structure whose weak equivalences are the derived Morita equivalences. 
\cite[Th\'eor\`eme 5.3]{Tab05}. 
We fix on $\operatorname{dgCat}$ a fibratnt resolution functor $R$, 
a cofibrant resolution functor $Q$ and a left framing 
$\Gamma_{\ast}$ (see \cite[Definition 5.2.7 and Theorem 5.2.8]{Hov99}.) 
and we also fix a small full subcategory 
$\operatorname{dgCat}_f\hookrightarrow\operatorname{dgCat}$ 
such that it contains all finite dg cells and 
any objects in $\operatorname{dgCat}_f$ are ($\mathbb{Z}$-)flat 
and homotopically finitely presented (see \cite[Definition 2.1 (3)]{TV07}.) 
and $\operatorname{dgCat}_f$ is closed uder the operations $Q$, $QR$ and 
$\Gamma_{\ast}$ and $\otimes$. 
The construction below does not depend upon a choice of $\operatorname{dgCat}_f$ 
up to Dwyer-Kan equivalences. 

\smallskip
\noindent
Let $s\widehat{\operatorname{dgCat}_f}$ and 
$s\widehat{{\operatorname{dgCat}_f}_{\bullet}}$ be the category of simplicial presheaves and that of pointed simplicial 
preshaves on $\operatorname{dgCat}_f$ respectively. 
We have the projective model structures on $s\widehat{\operatorname{dgCat}_f}$ 
and $s\widehat{{\operatorname{dgCat}_f}_{\bullet}}$ 
where the weak equivalences and the fibrations are 
the termwise simplicial weak equivalences and termwise Kan fibrations 
respectively. (see \cite[Theorem 11.6.1]{Hir03}.) 

\smallskip
\noindent
We denote the class of derived Morita equivalences in 
$\operatorname{dgCat}_f$ by $\Sigma$ and 
we also write $\Sigma_{+}$ for the image of $\Sigma$ 
by the composition of the Yoneda embedding 
$h\colon \operatorname{dgCat}_f \to s\widehat{\operatorname{dgCat}}$ and 
the canonical functor 
${(-)}_{+}\colon s\widehat{\operatorname{dgCat}_f}\to 
s\widehat{{\operatorname{dgCat}_f}_{\bullet}}$. 

\smallskip
\noindent
Let $P$ be the canonical map $\emptyset \to h(\emptyset)$ in 
$s\widehat{\operatorname{dgCat}_f}$ and we write $P_{+}$ 
for the image of $P$ by the functor ${(-)}_{+}$. 
We write $L_{\Sigma,P}s\widehat{{\operatorname{dgCat}_f}_{\bullet}}$ 
for the left Bosufield localization of 
$s\widehat{{\operatorname{dgCat}_f}_{\bullet}}$ by the 
set $\Sigma_{+}\cup\{P_{+}\}$. 
The Yoneda embedding functor induces a functor 
$$\mathbb{R}\underline{h}\colon \operatorname{Ho}(\operatorname{dgCat})\to\operatorname{Ho}(L_{\Sigma,P}s\widehat{{\operatorname{dgCat}_f}_{\bullet}})$$
which associates any dg category $\mathcal{A}$ to 
the pointed simplicial presheaves on $\operatorname{dgCat}_f$:
$$\mathbb{R}\underline{h}(\mathcal{A})\colon\mathcal{B}\mapsto 
{\operatorname{Hom}(\Gamma_{\ast}(Q\mathcal{B}),R(\mathcal{A}))}_{+}.$$

\smallskip
\noindent
Let $\mathcal{E}$ be the class of morphisms in 
$L_{\Sigma,P}s\widehat{{\operatorname{dgCat}_f}_{\bullet}}$ of shape 
$$\operatorname{Cone}[\mathbb{R}\underline{h}(\mathcal{A})\to 
\mathbb{R}\underline{h}(\mathcal{B})]\to \mathbb{R}\underline{h}(\mathcal{C})$$
associated to each exact sequence of dg categories 
$$\mathcal{A}\to\mathcal{B}\to\mathcal{C},$$
with $\mathcal{B}$ in $\operatorname{dgCat}_f$ 
where $\operatorname{Cone}$ means homotopy cofiber. 
We write ${\mathcal{M}\!\!ot}_{\operatorname{dg}}^{\operatorname{uloc}}$ 
for the left Bosufield localization of $L_{\Sigma,P}s\widehat{{\operatorname{dgCat}_f}_{\bullet}}$ by $\mathcal{E}$ 
and call it the {\it model category of 
unstable localizing non-commutative motives}. 

\smallskip
\noindent
Finally we write ${\mathcal{M}\!\!ot}_{\operatorname{dg}}^{\operatorname{loc}}$ 
for the stable symmetric monoidal model 
category of symmetric $S^1\otimes 1\!\!\!1$-spectra on ${\mathcal{M}\!\!ot}_{\operatorname{dg}}^{\operatorname{uloc}} $ 
(see \cite[\S 7]{Hov01}.) and call it 
the {\it model category of localizing non-commutative motives}. 

\bigskip
\noindent
Next we construct the stable model category ${\mathcal{M}\!\!ot}_{\operatorname{dg}}^{\operatorname{nilp}} $. 
First recall that 
we say that a non-empty full subcategory $\mathcal{Y}$ of 
a Quillen exact category $\mathcal{X}$ 
is a {\it topologizing subcategory} of $\mathcal{X}$ if 
$\mathcal{Y}$ is closed under finite direct sums and closed under 
admissible sub- and quotient objects. 
The naming of the term `topologizing' comes from 
noncommutative geometry of abelian categories 
by Rosenberg. (See \cite[Lecture 2 1.1]{Ros08}.) 
We say that a full subcategory $\mathcal{Y}$ of 
an exact category $\mathcal{X}$ is a {\it Serre subcategory} 
if it is an extensional closed topologizing subcategory of $\mathcal{X}$. 
For any full subcategory $\mathcal{Z}$ of $\mathcal{X}$, 
we write ${}^S\!\!\!\sqrt{\mathcal{Z}}$ 
for intersection of all Serre subcategories 
which contain $\mathcal{Z}$ and call it the 
{\it Serre radical of $\mathcal{Z}$} 
({\it in $\mathcal{X}$}). 

\begin{df}[Nilpotent immersion]
\label{df:nilp immersion}
Let $\mathcal{A}$ 
be a noetherian abelian category and let 
$\mathcal{B}$ a topologizing subcategory. 
We say that {\it $\mathcal{B}$ satisfies the d\'evissage condition} 
({\it in $\mathcal{A}$}) or say that the inclusion 
$\mathcal{B}\hookrightarrow\mathcal{A}$ 
is a {\it nilpotent immersion}
if one of the following equivalent conditions holds:
\begin{enumerate}
\item
For any object $x$ in $\mathcal{A}$, 
there exists a finite filtration of 
monomorphisms 
$$x=x_0\leftarrowtail x_1\leftarrowtail x_2\leftarrowtail \cdots 
\leftarrowtail x_n=0$$
such that for every $i<n$, $x_i/x_{i+1}$ is 
isomorphic to an object in $\mathcal{B}$. 

\item 
We have the equality 
$$\mathcal{A}={}^S\!\!\!\sqrt{\mathcal{B}}.$$
\end{enumerate}
(For the proof of the equivalence of the conditions above, 
see \cite[3.1]{Her97}, \cite[2.2]{Gar09}.) 
\end{df}

\begin{df}
\label{df:nilp motives}
We write $\mathcal{N}$ for the class of morphisms in 
${\mathcal{M}\!\!ot}_{\operatorname{dg}}^{\operatorname{uloc}}$ of shape 
$$\mathbb{R}\underline{h}(\mathcal{D}_{\operatorname{dg}}^b(\mathcal{B}))\to\mathbb{R}\underline{h}(\mathcal{D}_{\operatorname{dg}}^b(\mathcal{A}))$$
asscoated with each noetherian abelian category $\mathcal{A}$ 
and each nilpotent immersion $\mathcal{B}\hookrightarrow\mathcal{A}$. 

\smallskip
\noindent
We write ${\mathcal{M}\!\!ot}_{\operatorname{dg}}^{\operatorname{unilp}}$ 
the left Bousfield localization of 
${\mathcal{M}\!\!ot}_{\operatorname{dg}}^{\operatorname{uloc}}$ by 
$\mathcal{N}$ and 
call it the 
{\it model category of unstable nilpotent invariant non-commutative motives}. 

\smallskip
\noindent
Finally we write ${\mathcal{M}\!\!ot}_{\operatorname{dg}}^{\operatorname{nilp}}$ 
for the stable model 
category of symmetric $S^1\otimes 1\!\!\!1$-spectra on 
${\mathcal{M}\!\!ot}_{\operatorname{dg}}^{\operatorname{unilp}}$ 
and call it 
the {\it stable model category of nilpotent invariant non-commutative motives}. 
We denote the compositons of the following functors
$$
\operatorname{ExCat}\overset{\mathcal{D}^b_{\operatorname{dg}}}{\to}
\operatorname{dgCat}\to\operatorname{Ho}(\operatorname{dgCat})
\overset{\mathbb{R}\underline{h}}{\to} 
\operatorname{Ho}(L_{\Sigma,P}s\widehat{{\operatorname{dgCat}_f}_{\bullet}})\to
\operatorname{Ho}({\mathcal{M}\!\!ot}_{\operatorname{dg}}^{\operatorname{unilp}})\overset{\Sigma^{\infty}}{\to}
\operatorname{Ho}({\mathcal{M}\!\!ot}_{\operatorname{dg}}^{\operatorname{nilp}})
$$
by $M_{\operatorname{nilp}}$.
\end{df}

\smallskip
\noindent
We obtain the following theorem.

\begin{thm}[Motivic Gersten's conjecture]
\label{thm:motivic Gersten conjecture}
For any commutative regular local ring $R$ and for any integers 
$1\leq p\leq \dim R$, the inclusion functor 
$\mathcal{M}_R^{p-1}\hookrightarrow \mathcal{M}_R^p$ induces the zero morphism 
$$M_{\operatorname{nilp}}(\mathcal{M}_R^{p-1})\to M_{\operatorname{nilp}}(\mathcal{M}_R^p)$$
in $\operatorname{Ho}({\mathcal{M}\!\!ot}^{\operatorname{nilp}}_{\operatorname{dg}})$. 
\qed
\end{thm}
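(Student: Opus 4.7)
The plan is to mirror the strategy of \S\ref{sec:Strategy of proof}, with $K$-theory replaced by $M_{\operatorname{nilp}}$, and then to invoke Proposition~\ref{prop:motivic zero map theorem} at the final step. Concretely, I would establish the motivic analogs of the three isomorphisms \textbf{I}, \textbf{II}, \textbf{III} displayed in \S\ref{sec:Strategy of proof}, reduce the statement to the vanishing of the map induced by $\mathcal{P}_{B/gB}\hookrightarrow \mathcal{M}_B(1)$ for a suitable local ring $B$ and element $g$, and then conclude via the motivic zero morphism theorem.

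First I would obtain the motivic version of \textbf{I}: namely $M_{\operatorname{nilp}}(\mathcal{M}_R^p)\simeq \operatorname{colim}_I M_{\operatorname{nilp}}(\mathcal{M}_R^I)$, where $I$ runs through ideals generated by regular sequences of length $p$. At the level of underlying exact categories this is a filtered colimit of small exact categories; since $M_{\operatorname{nilp}}$ is built from the bounded dg derived category functor, the Yoneda embedding into simplicial presheaves on $\operatorname{dgCat}_f$, a sequence of left Bousfield localizations, and stabilization, and since each $\mathcal{D}^b_{\operatorname{dg}}(\mathcal{M}_R^I)$ is homotopically finitely presented, filtered colimits are preserved throughout.

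Next I would obtain the motivic versions of \textbf{II} and \textbf{III}. The analog of \textbf{II}, namely $M_{\operatorname{nilp}}(\mathcal{P}_{R/J})\simeq M_{\operatorname{nilp}}(\mathcal{M}_{R,\operatorname{red}}^J(1))$, follows from the resolution theorem for localizing invariants together with the regularity of $R$. The analog of \textbf{III}, $M_{\operatorname{nilp}}(\mathcal{M}_{R,\operatorname{red}}^J)\simeq M_{\operatorname{nilp}}(\mathcal{M}_R^J)$, is exactly the point where nilpotent invariance is designed to act: the inclusion $\mathcal{M}_{R,\operatorname{red}}^J\hookrightarrow \mathcal{M}_R^J$ is a nilpotent immersion in the sense of Definition~\ref{df:nilp immersion}, because every coherent $R$-module supported on $V(J)$ admits the finite $J$-adic filtration whose successive quotients are annihilated by $J$. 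Consequently the induced morphism lies in the class $\mathcal{N}$ of Definition~\ref{df:nilp motives} and is inverted in ${\mathcal{M}\!\!ot}^{\operatorname{nilp}}_{\operatorname{dg}}$.

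Combining these with the reduction of \S\ref{sec:Strategy of proof}, the problem becomes the vanishing of the map $M_{\operatorname{nilp}}(\mathcal{P}_{R/\mathfrak{f}_S R})\to M_{\operatorname{nilp}}(\mathcal{M}_{R/\mathfrak{f}_{S\smallsetminus\{s\}}R}(1))$ for any prime regular sequence $\mathfrak{f}_S$ of $R$ and any $s\in S$. Setting $B:=R/\mathfrak{f}_{S\smallsetminus\{s\}}R$ and $g:=f_s$, the ring $B$ is local (so every finitely generated projective $B$-module is free) and $g$ is a non-zero-divisor in the Jacobson radical of $B$; Proposition~\ref{prop:motivic zero map theorem} then delivers the vanishing. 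I expect the principal obstacle to be the dévissage step for $M_{\operatorname{nilp}}$, namely checking that the abelian-categorical nilpotent immersion survives the passage to $\mathcal{D}^b_{\operatorname{dg}}(-)$ and then to $\mathbb{R}\underline{h}(-)$ so as to land in the localizing class $\mathcal{N}$; a secondary technical point is the filtered cocontinuity underlying \textbf{I}, which requires that the dg derived categories $\mathcal{D}^b_{\operatorname{dg}}(\mathcal{M}_R^I)$ be homotopically finitely presented in the sense of the construction of ${\mathcal{M}\!\!ot}^{\operatorname{loc}}_{\operatorname{dg}}$.
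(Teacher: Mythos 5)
Your proposal follows exactly the paper's intended reduction: mirror the three isomorphisms \textbf{I}, \textbf{II}, \textbf{III} of \S\ref{sec:Strategy of proof} for $M_{\operatorname{nilp}}$, noting that \textbf{III} is now furnished by design---the inclusion $\mathcal{M}^J_{R,\operatorname{red}}\hookrightarrow\mathcal{M}^J_R$ is a nilpotent immersion whose image under $\mathbb{R}\underline{h}\circ\mathcal{D}^b_{\operatorname{dg}}$ belongs literally to the localizing class $\mathcal{N}$ of Definition~\ref{df:nilp motives}, so no further check is needed---and then conclude with Proposition~\ref{prop:motivic zero map theorem}. This is precisely how the paper means for the reader to fill in the \qed, and your identification of $B$ local and $g$ a non-zero-divisor in the Jacobson radical to satisfy the hypotheses of that proposition is correct.
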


\medskip
\noindent
SATOSHI MOCHIZUKI\\
{\it{DEPARTMENT OF MATHEMATICS,
CHUO UNIVERSITY,
BUNKYO-KU, TOKYO, JAPAN.}}\\
e-mail: {\tt{mochi@gug.math.chuo-u.ac.jp}}\\

\end{document}